\documentclass[11pt]{amsart}
\usepackage[cp1252]{inputenc}
\usepackage{amsmath,amssymb,amsxtra,amsthm}
\usepackage{txfonts}
\usepackage{mathrsfs,bbm,stmaryrd}
\usepackage{graphicx,color}
\usepackage{tikz,tikz-3dplot,pgfplots}
\usetikzlibrary{calc,3d}
\usepackage{citeref}
\usepackage[active]{srcltx}
\usepackage{hyperref}
\hypersetup{hidelinks}

\tdplotsetmaincoords{70}{110}

\def\XXint#1#2#3{{\setbox0=\hbox{$#1{#2#3}{\int}$ }
		\vcenter{\hbox{$#2#3$ }}\kern-.6\wd0}}

\DeclareMathOperator{\Capac}{Cap}
\DeclareMathOperator{\divop}{div}

\newtheorem{thm}{Theorem}[section]

\newtheorem{lem}[thm]{Lemma}

\newtheorem{rem}[thm]{Remark}

\newcommand{\ct}[1]{\langle {#1}\bigangle \lower.3ex\hbox{$_{t}$}}
\newcommand{\lt}[1]{[ {#1}]
	\lower.3ex\hbox{$_{t}$}}

\begin{document}
	
	\title[Sharp $p$-Capacity Estimates via Quermassintegrals in Hyperbolic Space]{Sharp $p$-Capacity Estimates via Quermassintegrals in Hyperbolic Space}

	\author{Xiaoshang Jin}
	\address{Xiaoshang Jin: School of Mathematics and Statistics, Huazhong University of Science and Technology, Wuhan, Hubei 430074, China}
	\email{jinxs@hust.edu.cn}
	
	\author{Yao Wan}
	\address{Yao Wan: Department of Mathematics, The Chinese University of Hong Kong, Shatin, Hong Kong}
	\email{yaowan@cuhk.edu.hk}

	\author{Jie Xiao}
	\address{Jie Xiao: Department of Mathematics and Statistics,
		Memorial University of Newfoundland, St. John's, NL A1C 5S7, Canada}
	\email{jxiao@math.mun.ca}
	
	\thanks{X. Jin was supported by `the Fundamental Research Funds for the Central Universities', HUST: \# 2025BRSXB002; Y. Wan was supported by Hong Kong RGC grant (Early Career Scheme) of Hong Kong \# 24304222 and \# 14300623, and NNSF of China \# 12222122; J. Xiao was supported by NSERC of Canada \# 202979.}


	\begin{abstract}
		This paper establishes sharp upper bounds for $p$-capacities $\mathrm{Cap}_{1<p<\infty}$ in the hyperbolic space $\mathbb{H}^n$ through hyperbolic quermassintegrals and effective curvature radii. The quermassintegral comparisons involve $W_{n-1}$, $W_{k+1}+k(n+1-k)^{-1}W_{k-1}$, and the pair $W_1\mid W_2$. For star-shaped, mean-convex or h-convex hypersurfaces, inverse mean curvature flow further produces curvature radii determined by $L^q$-averages of the normalized mean curvature and by moments of its squared hyperbolic excess. These radii convert the resulting estimates into sharp geodesic-ball comparisons for the capacity-to-area ratio. In the range $p>2m+1$, an interpolating radius combines the $2m$-th curvature-excess radius with the $L^\infty$ curvature scale, thereby linking the finite-moment and supremum regimes. Equality in the sharp comparisons characterizes geodesic balls.
	\end{abstract}
	
	\keywords{Hyperbolic spaces, capacities, quermassintegrals, normalized mean curvatures, measures, effective curvature radii}

	\subjclass[2010]{53C21; 31C15; 53C44; 58J35}

	\maketitle
	{\tableofcontents}
	
	\arraycolsep=1pt
	\numberwithin{equation}{section}
	
	\thanks{}

	\arraycolsep=1pt
	
	\numberwithin{equation}{section}
	
\section{Introduction and Background}

We begin by establishing the geometric and notation conventions utilized throughout this paper. Let $\mathbb{H}^n = (0,\infty) \times \mathbb{S}^{n-1}$ represent the $n$-dimensional hyperbolic space ($n \ge 2$) equipped with the standard Riemannian metric:
\[
\mathsf{g} = d\rho^2 + (\sinh^2\rho)\,d\theta^2
\]
expressed in polar coordinates $(\rho,\theta)$. Under this framework, we define:

\begin{itemize}
	\item $\nu$ and $\mu$: The $n$-dimensional Riemannian volume and $(n-1)$-dimensional Riemannian-Hausdorff measure induced by $\mathsf{g}$, respectively.
	\item $\nabla$ and $\Delta = \nabla \cdot \nabla$: The gradient and Laplacian operators associated with $\mathsf{g}$.
	\item $\divop \mathbf{F} = \nabla \cdot \mathbf{F}$: The divergence of a vector field $\mathbf{F}$ with respect to $\mathsf{g}$.
	\item $C_0^\infty(\mathbb{H}^n)$: The space of smooth functions with compact support in $\mathbb{H}^n$.
\end{itemize}

\begin{figure}[htbp]
	\centering
	\begin{tikzpicture}[scale=1.8]
		\draw[thick] (0,0) circle (1);
		\foreach \a in {0,30,...,150} {
			\draw[blue, opacity=0.6] (0,0) -- ({cos(\a)},{sin(\a)});
			\draw[blue, opacity=0.6] (0,0) -- ({-cos(\a)},{-sin(\a)});
		}
		\foreach \r in {0.2,0.4,0.6,0.8} {
			\draw[blue, opacity=0.6] (0,0) circle (\r);
		}
		\node at (0,-1.2) {\small Poincar\'e disk model for $(\mathbb{H}^2, \mathsf{g})$};
	\end{tikzpicture}
\end{figure}

Next, we view a compact subset $K \subset \mathbb{H}^n$ as a physical capacitor embedded within a space of constant negative curvature. For $p \in [1,\infty)$, the variational $p$-capacity (representing variational minimal energy or heat-loss profiles) of $K$ is defined by:
\[
\Capac_{p}(K) = \inf\left\{ \int_{\mathbb{H}^n} |\nabla f|^p \, d\nu : f \in C_0^\infty(\mathbb{H}^n), \; f = 1 \text{ on } K \right\}.
\]
The geometric endpoint case where $p=1$ corresponds directly to a structural perimeter minimization problem:
\[
\Capac_1(K) = \inf \bigl\{ \mu(\partial \Omega) : K \subset \Omega, \, \text{$\Omega$ is a bounded open set with smooth boundary} \bigr\}.
\]
In particular, when $K = \bar{B}(r)$ is a closed geodesic ball of radius $r \in [0,\infty)$ centered at the origin, its exact $p$-capacity is explicitly computable (cf. \cite{Gr, J, JX}) and given by:
\begin{equation}\label{1.1}
	\Capac_p\big(\bar{B}(r)\big) = \begin{cases}
		\left(\int_r^\infty \big(\omega_{n-1}\sinh^{n-1}t\big)^{\frac{1}{1-p}}\,dt\right)^{1-p} & \text{for } p \in (1,\infty); \\
		\omega_{n-1}\sinh^{n-1}r & \text{for } p = 1,
	\end{cases}
\end{equation}
where $\omega_{n-1} = \frac{2\pi^{n/2}}{\Gamma(n/2)}$ represents the surface area of the standard Euclidean unit sphere $\mathbb{S}^{n-1}$:
\[
\omega_{n-1} = \begin{cases} 
	\frac{2\pi^{n/2}}{(\frac{n}{2}-1)!} & \text{for even } n; \\
	\frac{2(2\pi)^{(n-1)/2}}{(n-2)!!} & \text{for odd } n.
\end{cases}
\]

For a smooth compact domain $K \subset \mathbb{H}^n$, the structural invariants known as hyperbolic quermassintegrals $W_k(K)$ are defined recursively for $k \in \{1,\dots,n-1\}$ by:
\[
\begin{cases}
	W_0(K) = \nu(K) = |K|; \\
	W_1(K) = n^{-1}\mu(K) = n^{-1} |\partial K|; \\
	W_{k+1}(K) = n^{-1}\int_{\partial K} \mathrm{p}_k \, d\mu - k(n+1-k)^{-1}W_{k-1}(K),
\end{cases}
\]
where $$\mathrm{p}_k = \sum_{i_1<\dots<i_k}\binom{n-1}{k}^{-1}\kappa_{i_1}\dots\kappa_{i_k}$$ denotes the normalized $k$-th mean curvature of the boundary $\partial K$. 

If $K$ is strictly convex in the background Euclidean metric $d\rho^2 + \rho^2\,d\theta^2$, its parallel outer domain at a distance $t \ge 0$ is defined as 
$$K_t = \{x \in \mathbb{H}^n : d(x,K) \le t\}.
$$ This parallel profile satisfies the hyperbolic Steiner-type formula (cf. \cite{Ko91}):
\begin{equation}\label{1.2}
	\begin{cases}
		W_{n}(K) = n^{-1}{\omega_{n-1}}; \\
		\int_{\partial K} \mathrm{p}_{n-1}\,d\mu = \omega_{n-1} + \frac{n(n-1)}{2}W_{n-2}(K); \\
		n W_1(K_t) = |\partial K_t| = \sum_{k=0}^{n-1} \left(\int_{\partial K}\binom{n-1}{k} \mathrm{p}_k \, d\mu\right)(\cosh^{n-1-k} t) (\sinh^k t).
	\end{cases}
\end{equation}

Crucially, as shown in \cite[Theorem 7]{LLX} and \cite[(5.5)]{JX}, when $\partial K$ is a smooth convex boundary, the variational $p$-capacity satisfies a geometric inequality bound known as the capacity-to-quermassintegral principle:
\begin{equation}\label{1.3}
	\Capac_{p}(K) \le \begin{cases} 
		\left( \int_0^\infty \big(nW_1(K_t)\big)^{\frac{1}{1-p}} \, dt \right)^{1-p} & \text{for } p \in (1,\infty); \\
		\inf_{t \in (0,\infty)} \big(nW_1(K_t)\big) & \text{for } p = 1.
	\end{cases}
\end{equation}
This bound acts as a core bridge linking potential theory to foundational settings in general relativity and statistical mechanics. Motivated by this principle, the goal of this paper is to leverage deeper combinations of the $W_k$-functionals to establish optimal, sharp upper bounds on $\Capac_p$. The remaining sections are structured as follows:

\begin{itemize}
	\item \textbf{Section 2:} We establish a sharp order relation to dominate $\Capac_{1<p<\infty}$ in terms of the specific single integral invariant $W_{n-1}$ (Theorem~\ref{thm2.1}).
	\item \textbf{Section 3:} We exploit the potential profiles of geodesic balls to cleanly control $\Capac_{1<p<\infty}$ via the recurrent grouping $W_{k+1} + k(n+1-k)^{-1}W_{k-1}$ (Theorem~\ref{thm2.3}).
	\item \textbf{Section 4:} By tracking structural geometric evolution profiles along the inverse mean curvature flow (IMCF), we optimally bound the sub-quadratic capacity range $\Capac_{1<p\le 2<n}$ via the ratio combination $W_1 \mid W_2$ (Theorem~\ref{thm3.1}).
	\item \textbf{Section 5:} We combine IMCF monotonicity with effective curvature radii to obtain sharp geodesic-ball comparisons for the capacity-to-area ratio, together with complementary bounds involving $W_1$, $W_2$, and $W_0$ (Theorem~\ref{thm3.3}).
\end{itemize}

\section{Dominating ${\rm Cap}_{1<p<\infty}$ through $W_{n-1}$}\label{s2}

In order to compare two different geometric quantities, we introduce the following order relation. Given two geometric quantities $Q_1(K)$ \& $Q_2(K)$ defined on any compact subset $K$ of $\mathbb{H}^n$, we write
\[
Q_1(K) \underset{R}{\lesssim} Q_2(K)
\]
provided that the geodesic ball of the same $Q_1$-value as $K$ has radius not larger than the geodesic ball of the same $Q_2$-value as $K$. In other words,
$$
Q_1(K) \underset{R}{\lesssim} Q_2(K) \Leftrightarrow h_1^{-1} ( Q_1(K) )\leq h_2^{-1} ( Q_2(K) )\ \ \text{where}\ \
h_i(r) = Q_i\big(\bar{B}(r)\big)\ \ \text{for}\ \ i=1,2.
$$

The full quermassintegral inequalities in $\mathbb{H}^n$ state that (cf. Wang-Xia~\cite{WX14} \&  Hu-Li-Wei~\cite{HLW22})
\begin{equation*}
W_k(K) \geq f_k \circ f_l^{-1}\bigl(W_l(K)\bigr)\ \ \text{for}\ \  \begin{cases} 0 \leq l < k \leq n-1;\\
\text{smooth bounded $h$-convex domains $K\subset\mathbb H^n$};\\
f_k(r)=W_k\big(\bar{B}(r)\big).
\end{cases}
\end{equation*}
This immediately implies the chain of radius comparisons
\begin{equation}
\label{2.1}
W_0(K)=|K|\underset{R}{\lesssim} W_1(K)=n^{-1}|\partial K|\underset{R}{\lesssim} W_2(K)\underset{R}{\lesssim}\cdots\underset{R}{\lesssim}W_{n-2}(K) \underset{R}{\lesssim}W_{n-1}(K).
\end{equation}
It is natural for us to ask where the hyperbolic $p$-capacity fits into this sequence. Interestingly, from \cite{JX} it follows that for any smooth compact domain $K \subset \mathbb{H}^n$ there holds
\begin{equation}\label{isocap}
W_0(K) \underset{R}{\lesssim} {\rm Cap}_{p}(K).
\end{equation}
Even more interestingly, we find the following $(1,\infty)\ni p$-result which, along with not only \eqref{isocap} but also \eqref{1.3} for $p=1$, not only enriches \eqref{2.1} at large but also exists as the first principle for capacities-to-quermassintegrals.

\begin{thm}\label{thm2.1}
	Given $p\in (1,\infty)$, let $K \subset \mathbb{H}^n$ be a compact domain with smooth strictly convex boundary $\partial K$. Then
	\begin{equation}\label{cap-pn}
	{\rm Cap}_{p}(K) \underset{R}{\lesssim} W_{n-1}(K),
	\end{equation}
	with equality iff $K$ is a geodesic ball.
\end{thm}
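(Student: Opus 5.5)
Let $r_0$ be the radius of the geodesic ball with $W_{n-1}(\bar B(r_0))=W_{n-1}(K)$. Since $r\mapsto \Capac_p(\bar B(r))$ is strictly increasing by \eqref{1.1}, the claim \eqref{cap-pn} is equivalent to
\[
\Capac_p(K)\le \Capac_p\big(\bar B(r_0)\big)=\left(\int_{r_0}^\infty \big(\omega_{n-1}\sinh^{n-1}t\big)^{\frac1{1-p}}dt\right)^{1-p}.
\]
The natural route goes through the capacity-to-quermassintegral principle \eqref{1.3}. It suffices to show the pointwise area comparison
\begin{equation*}
nW_1(K_t)=|\partial K_t|\le \omega_{n-1}\sinh^{n-1}(t+r_0)=|\partial B(r_0+t)|\qquad\text{for all } t\ge 0.
\end{equation*}
Indeed, the exponent $\frac1{1-p}<0$ reverses this inequality. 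Integrating in $t$ and raising to the negative power $1-p$ reverses it once more, which gives $\Capac_p(K)\le\big(\int_{r_0}^\infty(\omega_{n-1}\sinh^{n-1}s)^{\frac1{1-p}}ds\big)^{1-p}$.

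To get the area comparison I would expand both sides with the Steiner formula \eqref{1.2}:
\[
|\partial K_t|=\sum_{k=0}^{n-1}\binom{n-1}{k}\Big(\int_{\partial K}\mathrm p_k\,d\mu\Big)\cosh^{n-1-k}t\,\sinh^k t .
\]
For the ball, the corresponding coefficients are $\binom{n-1}{k}\omega_{n-1}\sinh^{n-1}r_0\coth^k r_0\cdots$; more precisely, $\sinh^{n-1}(r_0+t)=(\sinh r_0\cosh t+\cosh r_0\sinh t)^{n-1}$, which expands in the same monomials $\cosh^{n-1-k}t\sinh^kt$ with coefficient $\binom{n-1}{k}\sinh^{n-1-k}r_0\cosh^k r_0$. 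It therefore suffices to compare coefficientwise:
\[
\int_{\partial K}\mathrm p_k\,d\mu\le \omega_{n-1}\sinh^{n-1-k}r_0\cosh^kr_0=\int_{\partial B(r_0)}\mathrm p_k\,d\mu,\qquad k=0,\dots,n-1.
\]

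Each $\int\mathrm p_k$ is a linear combination of $W_{k+1}$ and $W_{k-1}$ by the recursion. The top case $k=n-1$ is fixed by \eqref{1.2}: $\int\mathrm p_{n-1}=\omega_{n-1}+\tfrac{n(n-1)}2W_{n-2}$. So the needed inequalities amount to upper bounds of $W_j(K)$ by $W_j(\bar B(r_0))$ for $j\le n-2$, i.e.\ $W_j\underset{R}{\lesssim}W_{n-1}$. This is exactly the chain \eqref{2.1}, via the quermassintegral inequalities of Wang--Xia and Hu--Li--Wei. The difficulty is that those inequalities are stated for $h$-convex domains while we only assume strict convexity. For the mixed term $\int\mathrm p_k=nW_{k+1}+\frac{nk}{n+1-k}W_{k-1}$ both pieces are bounded above by their ball values, since both radii are $\le r_0$.

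This is the step I expect to be the main obstacle: justifying $W_j\underset{R}{\lesssim}W_{n-1}$ for merely convex $K$. I would first try the known $W_{n-1}$-versus-$W_j$ inequalities for convex (not $h$-convex) domains. For instance, $W_{n-1}\ge f_{n-1}\circ f_j^{-1}(W_j)$ is available for convex/star-shaped $k$-convex domains in the relevant range (Ge--Wang--Wu type results). Failing that, I would deform $K$ along the parallel flow $K_s$, which becomes $h$-convex for large $s$, and control the coefficients monotonically.

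For equality: if $\Capac_p(K)=\Capac_p(\bar B(r_0))$, the chain forces $|\partial K_t|=|\partial B(r_0+t)|$ for a.e.\ $t$. In particular $t=0$ gives equality in $W_1\underset{R}{\lesssim}W_{n-1}$, whose rigidity case yields that $K$ is a geodesic ball. The converse is immediate from \eqref{1.1}.
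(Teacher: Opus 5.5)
Your skeleton is exactly the paper's: \eqref{1.3}, the Steiner expansion \eqref{1.2}, and the coefficientwise bounds $\int_{\partial K}\mathrm{p}_k\,d\mu\le\int_{\partial B(r_0)}\mathrm{p}_k\,d\mu$, with $r_0$ the $W_{n-1}$-radius, which give $|\partial K_t|\le|\partial B(r_0+t)|$. You reduce all these bounds, via $\int_{\partial K}\mathrm{p}_k\,d\mu=nW_{k+1}+\frac{nk}{n+1-k}W_{k-1}$, to the single chain $W_j\underset{R}{\lesssim}W_{n-1}$, $0\le j\le n-2$. This reduction is correct, and a bit leaner than the paper, which for $k\le n-3$ also invokes \cite[Theorem 1.3]{HL23} to bound $\int\mathrm{p}_k$ by the total curvature $\int\mathrm{p}_{n-1}$.

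For $n=2$ the chain is only the hyperbolic isoperimetric inequality $|K|\le|\bar B(r_0)|$ (the paper's $|K|+2\pi\le\sqrt{|\partial K|^2+4\pi^2}$), so that case is complete. However, your equality argument at $t=0$ is vacuous there, since it compares $W_1$ with itself. Instead, use that equality for all $t>0$ forces equality of every Steiner coefficient, in particular $2\pi+|K|=\int\mathrm{p}_1\,d\mu=2\pi+|\bar B(r_0)|$, i.e.\ equality in the isoperimetric inequality.

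For $n\ge3$ there is a genuine gap at precisely the step you flag, and neither of your remedies closes it. What is needed is the Alexandrov--Fenchel inequality $W_{n-1}(K)\ge f_{n-1}\circ f_l^{-1}(W_l(K))$, $0\le l\le n-2$, with its rigidity, for strictly convex (not $h$-convex) domains. The paper imports this from \cite{AHL20, BGL19}, whose flow arguments work in the convex class. The Ge--Wang--Wu inequalities \cite{GWW14} require $h$-convexity, so they are not a substitute.

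The parallel-body fallback cannot work. If $\kappa>0$ is a principal curvature of $\partial K$, the corresponding one on $\partial K_s$ is $\kappa(s)=\frac{\kappa\cosh s+\sinh s}{\kappa\sinh s+\cosh s}$. Hence $\kappa(s)-1=\frac{(\kappa-1)e^{-s}}{\kappa\sinh s+\cosh s}$ keeps the sign of $\kappa-1$ for every $s$, so $K_s$ is never $h$-convex unless $K$ already is. Moreover, an inequality for $K_s$ would not transfer back to $K$ without a separate monotonicity argument.

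With the convex-class inequalities of \cite{AHL20, BGL19} inserted, your argument goes through, including the equality case via $t=0$ and the rigidity of $W_1\underset{R}{\lesssim}W_{n-1}$.
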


\begin{proof} \eqref{cap-pn}'s equality follows immediately from  that in the quermassintegral inequalities. So, it is enough to verify \eqref{cap-pn}'s inequality according to the dimension situations: $n=2$; $n\ge 3$.
	
	\begin{itemize}
	\item If $n=2$, then the Gauss-Bonnet theorem together with the isoperimetric inequality yields
	\begin{align*}
	\int_{\partial K}{\rm p}_1 d\mu=|K|+2\pi\leq \sqrt{|\partial K|^2+4\pi^2}.
	\end{align*}
	Upon using \eqref{1.2}-\ref{1.3}, we obtain
	\begin{align*}
	{\rm Cap}_{p}(K)
	&\leq \left( \int_0^\infty (|\partial K|\cosh t+ \sqrt{|\partial K|^2+4\pi^2}\sinh t)^{\frac{1}{1-p}} \, dt \right)^{1-p}\\
	& =  \left( \int_{\text{arsinh}\frac{|\partial K|}{2\pi}}^\infty (2\pi\sinh t)^{\frac{1}{1-p}} \, dt \right)^{1-p}\\
	& \equiv F_p(|\partial K|).
	\end{align*}
	The function $F_p(s)$ not only increases in $s$ but also satisfies
	\begin{align*}
	F_p(|\partial {\bar B}(r)|)={\rm Cap}_{p}\big({\bar B}(r)\big).
	\end{align*}
	Letting
	$$
	\begin{cases} g_p(r) \equiv {\rm Cap}_{p}\big({\bar B}(r)\big);\\
	h(r) \equiv|\partial {\bar B}(r)|,
	\end{cases}
	$$
	gives
	$$
	\begin{cases} F_p(s) = g_p \circ h^{-1}(s);\\
	
	{\rm Cap}_{p}(K)\leq g_p\circ h^{-1}(|\partial K|)
	\Leftrightarrow\, {\rm Cap}_{p}(K) \underset{R}{\lesssim} |\partial K|=2W_1(K).
	\end{cases}
	$$
	
	\item If $n\geq 3$, then by \cite[Theorem 1.3]{HL23}\footnote{In the original notes, it was mistakenly claimed that the inequality also holds for $k=n-2$.}, we have
	\begin{equation*}
	\int_{\partial K} {\rm p}_k \, d\mu
	\leq \Biggl(\int_{\partial K} {\rm p}_{n-1} \, d\mu\Biggr)^{\frac{k}{n-1}}
	\Biggl(\Biggl(\int_{\partial K} {\rm p}_{n-1} \, d\mu\Biggr)^{\frac{2}{n-1}} - \omega_{n-1}^{\frac{2}{n-1}}\Biggr)^{\frac{n-1-k}{2}}\ \forall\ k\in\{0,1,...,n-3\},
	\end{equation*}
	with equality iff $K$ is a geodesic ball. Since $\partial K$ is strictly convex, the quermassintegral inequalities in \cite{AHL20, BGL19} give
	\begin{align*}
	W_{n-1}(K) \geq f_{n-1} \circ f_{l}^{-1}\bigl(W_{l}(K)\bigr)\ \forall\  l\in\{0,1,...,n-2\}.
	\end{align*}
	From the definition of quermassintegrals, we get
	$$
	\begin{cases}
	\int_{\partial K} {\rm p}_{n-1}\,d\mu=n W_n(K)+2^{-1}{n(n-1)}W_{n-2}(K)
	=\omega_{n-1}+2^{-1}{n(n-1)}W_{n-2}(K);\\
	\int_{\partial K} {\rm p}_{n-2}\,d\mu=n W_{n-1}(K)+3^{-1}{n(n-2)}W_{n-3}(K).
	\end{cases}
	$$
	In conclusion, we obtain
	\begin{align*}
	\int_{\partial K} {\rm p}_k\, d\mu \underset{R}{\lesssim} W_{n-1}(K)\ \forall\  k\in\{0,1,...,n-1\}.
	\end{align*}
	More precisely, let $\xi(s)$ be defined by
	$$\xi(W_{n-1}\big({\bar B}(r)\big))=\int_{\partial {\bar B}(r)}{\rm p}_{n-1}\,d\mu.
	$$ Then
	\begin{align}\label{pk-W}
	\int_{\partial K} {\rm p}_k \, d\mu
	\leq \xi\big(W_{n-1}(K)\big)^{\frac{k}{n-1}}
	\left(\xi\big(W_{n-1}(K)\big)^{\frac{2}{n-1}} - \omega_{n-1}^{\frac{2}{n-1}}\right)^{\frac{n-1-k}{2}}
	\ \forall\  k\in\{0,1,...,n-1\}.
	\end{align}
	Substituting this into the Steiner formula yields
	\begin{align*}
	|\partial K_t|
	&\leq \frac{\cosh^{n-1}t  \left(\big(\xi\big(W_{n-1}(K)\big)^{\frac{2}{n-1}}-\omega_{n-1}^{\frac{2}{n-1}}\right)^{\frac{n-1}{2}}}{\left(\sum\limits_{k=0}^{n-1}\binom{n-1}{k}
	\left(\sqrt{\frac{\xi\big(W_{n-1}(K)\big)^{\frac{2}{n-1}} }{\xi\big(W_{n-1}(K)\big)^{\frac{2}{n-1}} -\omega_{n-1}^{\frac{2}{n-1}}}}\tanh t\right)^{k}\right)^{-1}}\\
	&=\frac{ \cosh^{n-1}t  \left(\xi\big(W_{n-1}(K)\big)^{\frac{2}{n-1}}-\omega_{n-1}^{\frac{2}{n-1}}\right)^{\frac{n-1}{2}}}{
	\left(1+\sqrt{\frac{\xi\big(W_{n-1}(K)\big)^{\frac{2}{n-1}} }{\xi\big(W_{n-1}(K)\big)^{\frac{2}{n-1}} -\omega_{n-1}^{\frac{2}{n-1}}}}\tanh t\right)^{1-n}}\\
	&=\left(\sqrt{\xi\big(W_{n-1}(K)\big)^{\frac{2}{n-1}} -\omega_{n-1}^{\frac{2}{n-1}}}\cosh t+\sqrt{\xi\big(W_{n-1}(K)\big)^{\frac{2}{n-1}} }\sinh t\right)^{n-1}.
	\end{align*}
	Thus,
	\begin{align*}
	&{\rm Cap}_{p}(K)\\
	&\ \leq\left(\int_0^{\infty}
	\left(\sqrt{\xi\big(W_{n-1}(K)\big)^{\frac{2}{n-1}} -\omega_{n-1}^{\frac{2}{n-1}}}\cosh t+\sqrt{\xi\big(W_{n-1}(K)\big)^{\frac{2}{n-1}} }\sinh t\right)^{\frac{n-1}{1-p}} dt\right)^{1-p}\\
	&\ \equiv G_p\big(W_{n-1}(K)\big) .
	\end{align*}
Note that the function $G_p(s)$ not only increases in $s$ but also satisfies
	\begin{align*}
	G_p\left(W_{n-1}\big({\bar B}(r)\big)\right)={\rm Cap}_{p}\big({\bar B}(r)\big).
	\end{align*}
	Thus, we achieve
	$$
	\begin{cases}
	G_p=g_p\circ f_{n-1}^{-1};\\
	{\rm Cap}_{p}(K)\leq g_p\circ f_{n-1}^{-1}\big(W_{n-1}(K)\big)
	\Leftrightarrow\, {\rm Cap}_{p}(K) \underset{R}{\lesssim} W_{n-1}(K).
	\end{cases}
	$$
\end{itemize}

\end{proof}

\begin{rem}\label{rem2.2} Two comments are in order.
	\begin{itemize}
		\item When $\partial K$ is not only smooth but also horospherically convex, from \cite[Theorem 5.1]{JX} it follows that
	\begin{align}\label{cap-rV}
	{\rm Cap}_{p}(K) \underset{R}{\lesssim} {\rm RV}(K)=\lim\limits_{t\to\infty} \frac{|K_t|}{|B(t)|}=\sum_{i=0}^{n-1}\binom{n-1}{i} \frac{\int_{\partial K}{\rm p}_i\, d\mu}{\omega_{n-1}}.
	\end{align}
Meanwhile, (\ref{pk-W}) further implies
	\begin{align}\label{2.6}
	{\rm RV}(K)\underset{R}{\lesssim} W_{n-1}(K).
	\end{align}
Interestingly, a combination of \eqref{cap-rV}-\eqref{2.6} can refine \eqref{cap-pn} into
$$
{\rm Cap}_{p}(K) \underset{R}{\lesssim} {\rm RV}(K)\underset{R}{\lesssim} W_{n-1}(K).
$$

\item Even more interestingly, we find
\begin{equation}
\label{2.7}
\min\left\{{\rm RV}(K):\ \sum_{i=0}^{n-1}\left(\frac{\int_{\partial K}{\rm p}_i\, d\mu}{\omega_{n-1}}\right)^{-1}=1\right\}=\left(\sum_{i=0}^{n-1}\binom{n-1}{i}^\frac12\right)^2.
\end{equation}
Indeed, via not only letting
$$
\begin{cases}\lambda_i^{-1}=\frac{\int_{\partial K}{\rm p}_i\, d\mu}{\omega_{n-1}};\\
\sum_{i=0}^{n-1}\lambda_i=1\ \ {\color{blue}\text{whose case $n=3$ visualized as}}\ \
\begin{tikzpicture}[tdplot_main_coords, scale=3]
\draw[->] (0,0,0) -- (1.2,0,0) node[anchor=north] {$$};
\draw[->] (0,0,0) -- (0,1.2,0) node[anchor=west] {$$};
\draw[->] (0,0,0) -- (0,0,1.2) node[anchor=south] {$$};
\filldraw[fill=blue!20, draw=blue, opacity=0.7]
(1,0,0) -- (0,1,0) -- (0,0,1) -- cycle;
\node at (1,0,0) [below right] {$(1,0,0)$};
\node at (0,1,0) [left] {$(0,1,0)$};
\node at (0,0,1) [above] {$(0,0,1)$};
\end{tikzpicture}
\end{cases}
$$
but also applying the Lagrange multiplier method to
$$
{\rm RV}(K)=\sum_{i=0}^{n-1}\binom{n-1}{i}\lambda_i^{-1},
$$
we see that the minimizing problem of \eqref{2.7} can be resolved by differentiating
$$
\Phi=\sum_{i=0}^{n-1}\binom{n-1}{i}\lambda_i^{-1}+t\Bigg(\sum_{i=0}^{n-1}\lambda_i-1\Bigg).
$$
with respect to $\lambda_i$. For this, we get
	$$
	\begin{cases}
	\frac{\partial \Phi}{\partial \lambda_i}=-{\binom{n-1}{i}}\lambda_i^{-2}+t=0\ \ \forall\ \ i\in\{0,1,...,n-1\};\\
	\frac{\partial\Phi}{\partial t}=\sum_{i=0}^{n-1} \lambda_i-1=0,
	\end{cases}
	$$
	thereby having not only
	$$
	\begin{cases}
		t=\left(\sum_{i=0}^{n-1}{\binom{n-1}{i}}^\frac{1}{2}\right)^{2};\\
	\lambda_i=\left({t}^{-1}{\binom{n-1}{i}}\right)^\frac{1}{2}=\left(\sum_{j=0}^{n-1}{\binom{n-1}{j}}^\frac{1}{2}\right)^{-1}{\binom{n-1}{i}}^\frac{1}{2}\ \ \forall\ \ i\in\{0,1,...,n-1\},
	\end{cases}
	$$
	but also the minimum as desired in \eqref{2.7}.

\end{itemize}
\end{rem}

\section{Dominating ${\rm Cap}_{1<p<\infty}$ through $ W_{k+1}+k(n+1-k)^{-1}W_{k-1}$}\label{s3}

Thanks to
$$
n^{-1}\int_{\partial K} {\rm p}_k \, d\mu=W_{k+1}(K)+k(n+1-k)^{-1}W_{k-1}(K),
$$
plus \eqref{1.3} for $p=1$, the second sharp principle for capacities-to-quermassintegrals can be described below.

\begin{thm}\label{thm2.3}
Given $p\in (1,\infty)$ \&\ $r\in (0,\infty)$, let $K$ be a compact convex body in $\mathbb{H}^{n}$ with smooth boundary $\partial K$. Then there holds
	\begin{equation}\label{2.8}
	{\rm Cap}_p(K)  \leq\frac{\sum\limits_{k=0}^{n-1} \binom{n-1}{k}\int_{\partial K} {\rm p}_k \, d\mu\cdot\int_0^\infty \sinh^{\frac{p(1-n)}{p-1}}(r+\tau)\cdot\cosh^{n-1-k} \tau\cdot  \sinh^k \tau\,d\tau}{\Big(\int_r^\infty \sinh^{\frac{1-n}{p-1}} s\,ds\Big)^p},
	\end{equation}
	with the equality being valid for $K=\bar{B}(r)$.
\end{thm}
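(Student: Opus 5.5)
The plan is to test the variational definition of $\Capac_p(K)$ with a radial-type function built from the geodesic-ball potential and composed with the distance to $K$. Let
\[
u_r(s)=\frac{\int_s^\infty \sinh^{\frac{1-n}{p-1}}\sigma\,d\sigma}{\int_r^\infty \sinh^{\frac{1-n}{p-1}}\sigma\,d\sigma},\qquad s\ge r,
\]
which is the $p$-harmonic capacitary potential of $\bar B(r)$. It equals $1$ at $s=r$, decreases to $0$ at infinity, and has
\[
|u_r'(s)|=\frac{\sinh^{\frac{1-n}{p-1}}s}{\int_r^\infty \sinh^{\frac{1-n}{p-1}}\sigma\,d\sigma}.
\]
The test function is $f(x)=u_r\big(r+d(x,K)\big)$. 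It equals $1$ on $K$, and since $|\nabla d(\cdot,K)|=1$ a.e. outside $K$ (convexity makes $d$ of class $C^{1,1}$ there), we get $|\nabla f|=|u_r'(r+d)|$. A standard truncation and mollification shows that such Lipschitz functions decaying at infinity are admissible in the infimum defining $\Capac_p$.

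The energy is computed by the coarea formula along the level sets $\{d=\tau\}=\partial K_\tau$:
\[
\int_{\mathbb H^n}|\nabla f|^p\,d\nu=\int_0^\infty |u_r'(r+\tau)|^p\,|\partial K_\tau|\,d\tau
=\frac{\int_0^\infty \sinh^{\frac{p(1-n)}{p-1}}(r+\tau)\,|\partial K_\tau|\,d\tau}{\Big(\int_r^\infty \sinh^{\frac{1-n}{p-1}} s\,ds\Big)^p}.
\]
Next, insert the Steiner formula, the third line of \eqref{1.2}:
\[
|\partial K_\tau|=\sum_{k=0}^{n-1}\binom{n-1}{k}\Big(\int_{\partial K}\mathrm p_k\,d\mu\Big)\cosh^{n-1-k}\tau\,\sinh^k\tau.
\]
Interchanging the finite sum with the integral gives exactly the right-hand side of \eqref{2.8}. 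The inequality follows because $\Capac_p(K)$ is at most the energy of any admissible function.

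For the equality case $K=\bar B(r)$, note that $\partial K_\tau=\partial B(r+\tau)$, so $|\partial K_\tau|=\omega_{n-1}\sinh^{n-1}(r+\tau)$. The numerator then becomes
\[
\omega_{n-1}\int_r^\infty \sinh^{\frac{1-n}{p-1}}s\,ds,
\]
so the bound reduces to
\[
\omega_{n-1}\Big(\int_r^\infty \sinh^{\frac{1-n}{p-1}}s\,ds\Big)^{1-p},
\]
which is $\Capac_p(\bar B(r))$ by \eqref{1.1}. Two bookkeeping points need checking: the constant $\omega_{n-1}^{1/(1-p)}$ in \eqref{1.1} must factor out correctly, and the Steiner expansion must be consistent for balls. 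One can also verify this directly, since for a ball $f$ is exactly the capacitary potential.

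\textbf{Main obstacle.} The delicate step is justifying the test function and convergence. Finiteness requires the numerator integral to converge. Its integrand grows like $e^{(n-1)\tau}$ from $|\partial K_\tau|$ and decays like $e^{-\frac{p(n-1)}{p-1}\tau}$, so the product behaves like $e^{-\frac{n-1}{p-1}\tau}$, which is integrable. After that, one must approximate $f$ by $C_0^\infty$ functions equal to $1$ on $K$. I would first cut off at $\tau=T$, replacing $u_r$ by $(u_r-u_r(r+T))_+/(1-u_r(r+T))$, then mollify, and let $T\to\infty$. The Steiner formula needs $K$ convex, as in \eqref{1.2}, and this is where the convexity hypothesis enters.
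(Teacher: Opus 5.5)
Your proposal is correct and is essentially the paper's proof. The paper's competitor $u=F^{-1}(d_K)$, with $F(t)=\mathrm{v}^{-1}(t)-r$, is exactly your $f(x)=u_r\big(r+d(x,K)\big)$; the paper applies the coarea formula over the level sets of $u$ and then substitutes $\tau=\mathrm{v}^{-1}(t)-r$, which gives the same Steiner-formula integral you get by slicing directly along $\{d=\tau\}$. Your convergence check and your ball-case computation (where $\omega_{n-1}$ does factor out to match \eqref{1.1}) supply details the paper leaves as ``standard''.
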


\begin{proof} Without loss of generality, we assume that not only $r\in (0,\infty)$ but also $v(x)$ be the $p-$capacity potential of $\bar{B}(r)$. Then
	$$
	\begin{cases} v(x)=\mathrm{v}(|x|)=\frac{I(|x|)}{I(r)};\\ I(\rho)=\int_\rho^\infty \sinh^{\frac{1-n}{p-1}} s\,ds.
	\end{cases}
	$$
	Upon defining
	$$
	\begin{cases}
	g(t)=|\nabla v|_{v=t};\\
	 F(t)=\int_t^1\frac{ds}{g(s)},
	 \end{cases}
	 $$
	we directly calculate
	$$
	\begin{cases}
	g(t)=-\mathrm v'(\mathrm{v}^{-1}(t));\\
	 F(t)=\int_{\mathrm{v}^{-1}(t)}^{\mathrm{v}^{-1}(1)}\frac{\mathrm{v}'(\tau)}{-\mathrm{v}'(\tau)}\,d\tau=\mathrm{v}^{-1}(t)-r,
	 \end{cases}
	$$
	thereby reading off that $F(t)$ is a decreasing function with
	$$
	\begin{cases}
F(1)=0;\\
F(0)=\infty.
\end{cases}
$$
	
	Given a smooth compact domain $K\subset\mathbb{H}^n,$ let
	$$
	\begin{cases}
	u(x)=F^{-1}\big(d_K(x)\big);\\
	d_K(\cdot)=d(\cdot, K)=\text{the distance function of $K$};\\
	 F^{-1}=\text{the inverse of $F$}.
	 \end{cases}
	 $$
	  By truncation at infinity and standard smoothing/density, $u$ may be used as an admissible competitor in the variational characterization of capacity. Then
	$$
	\begin{cases}
	u|_K=1;\\
	u|_\infty =0;\\
	|\nabla u|_{u=t}=g(t).
	\end{cases}
	$$
	As a consequence, we have
	{
	\begin{align*}
&	{\rm Cap}_p(K)\\
&\ \ \leq \int_{K^c} |\nabla u|^p\, d\nu
	\\
	&\ \ =\int_0^1dt\int_{\{u=t\}}|\nabla u|^{p-1}\,d\mu
	\\
	&\ \ =\int_0^1 \big(g(t)\big)^{p-1}\sum_{k=0}^{n-1} \int_{\partial K}\binom{n-1}{k} {\rm p}_k \, d\mu\cdot\big(\cosh^{n-1-k} F(t)\big)\cdot \big(\sinh^k F(t)\big) \,dt
	\\
	&\ \ =\sum_{k=0}^{n-1} \binom{n-1}{k}\int_{\partial K} {\rm p}_k \, d\mu\cdot |\mathrm v'(\mathrm{v}^{-1}(t))|^{p-1}\cdot \Bigg(\frac{\cosh^{n-1-k} (\mathrm{v}^{-1}(t)-r)}{\big(\sinh^k (\mathrm{v}^{-1}(t)-r)\big)^{-1}}\Bigg) \,dt
	\\
	&\ \ =\sum_{k=0}^{n-1} \binom{n-1}{k}\int_{\partial K} {\rm p}_k \, d\mu\cdot\int_{\mathrm{v}^{-1}(0)-r}^{\mathrm{v}^{-1}(1)-r}\frac{\cosh^{n-1-k} \tau\cdot  \sinh^k \tau\cdot v'(r+\tau)}{|\mathrm v'(r+\tau)|^{1-p}} \,d\tau
	\\
	&\ \ =\sum_{k=0}^{n-1} \binom{n-1}{k}\int_{\partial K} {\rm p}_k \, d\mu\cdot\int_0^\infty |\mathrm v'(r+\tau)|^{p}\cosh^{n-1-k} \tau\cdot  \sinh^k \tau\,d\tau
	\\
	&\ \ =\big(I(r)\big)^{-p}\sum_{k=0}^{n-1} \binom{n-1}{k}\int_{\partial K} {\rm p}_k \, d\mu\cdot\int_0^\infty \sinh^{\frac{p(1-n)}{p-1}}(r+\tau)\cdot\cosh^{n-1-k} \tau\cdot  \sinh^k \tau\,d\tau,
	\end{align*}
	}
as desired in \eqref{2.8}.

Of course, \eqref{2.8}'s equality is attainable whenever $K=\bar{B}(r)$.
\end{proof}

\begin{rem} Two comments are in order.
	
	\begin{itemize}
		\item If $\mathsf{r}_{n-1}>0$ is the solution of
		$$W_{n-1}(K)=W_{n-1}\big({\bar B}(\mathsf{r}_{n-1})\big),
		$$
		then applying
		$$ \int_{\partial K} {\rm p}_k\, d\mu \underset{R}{\lesssim} W_{n-1}(K),
		$$
		{equivalently}, 
		$$\int_{\partial K} {\rm p}_k\, d\mu \leq \int_{\partial \bar{B}(\mathsf{r}_{n-1})} {\rm p}_k\, d\mu\ \ \forall\ \  k\in\{0,1,...,n-1\}
		$$
		to Theorem \ref{thm2.3}'s case $r=\mathsf{r}_{n-1}$ yields
		$${\rm Cap}_{p}(K) \leq {\rm Cap}_{p}\big({\bar B}(\mathsf{r}_{n-1})\big).$$
Meanwhile, when $\partial K$ is not only smooth but also horospherically convex, we can utilize \cite[(5.3)]{JX} to get that if $$\mathsf{r}_\infty=(n-1)^{-1}\ln {\rm RV}(K),
	$$
	then
        $$\int_0^\infty \sinh^{\frac{p(1-n)}{p-1}}(\mathsf{r}_\infty+\tau)\cdot |\partial K_{\tau}| \,d\tau
        \leq\omega_{n-1} \int_0^\infty \sinh^{\frac{1-n}{p-1}}(\mathsf{r}_\infty+\tau)\,d\tau.
		$$
   Further, substituting $r = \mathsf{r}_\infty$ into Theorem \ref{thm2.3} yields (\ref{cap-rV}) again, or equivalently,
		$${\rm Cap}_{p}(K) \leq {\rm Cap}_{p}\big({\bar B}(\mathsf{r}_\infty)\big).$$

       \item If
	$$
	\lambda_k=\frac{\Big(\int_r^\infty \sinh^{\frac{1-n}{p-1}} s\,ds\Big)^p}{
	\int_{\partial K} {\rm p}_k \, d\mu\cdot\int_0^\infty \sinh^{\frac{p(1-n)}{p-1}}(r+\tau)\cdot\cosh^{n-1-k} \tau\cdot  \sinh^k \tau\,d\tau}\ \ \forall\ \ k\in\{0,1,...,n-1\},
$$
then the argument for \eqref{2.7} yields that \eqref{2.8}'s right-hand-side enjoys the following minimizing process
$$
\min\left\{\sum\limits_{k=0}^{n-1} \binom{n-1}{k}\lambda_k^{-1}:\quad \sum_{k=0}^{n-1}\lambda_k=1\right\}=\left(\sum_{i=0}^{n-1}\binom{n-1}{i}^\frac12\right)^2.
$$
	
	\end{itemize}
\end{rem}

\section{Dominating ${\rm Cap}_{1<p\le 2<n}$ through $W_{1}\, |\, W_2$}\label{s4}

In order to reach the desired principle, let us suitably generalize \cite[Theorem 1.2]{JX25} (cf. \cite{vBG}) living in the Euclidean space $\mathbb R^n$.

\begin{itemize}
\item Given a compact domain $K\subset\mathbb{R}^n$, denote by ${\rm Cap}^{Eu}_{p}(K)$ the Euclidean $p$-capacity of $K$, and by ${W}^{Eu}_k(K)$ the Euclidean $k$-th quermassintegral for $k\in\{0, \ldots, n\}$. If $K \subset \mathbb{R}^n$ is a convex body with Lipschitz boundary $\partial K$, then
	\begin{equation}\label{cap-rn}
	{\rm Cap}^{Eu}_{p}(K)\leq n \left(\frac{p-1}{n-p}\right)^{1-p} \big({W}^{Eu}_1(K)\big)^{2-p} \big({W}^{Eu}_2(K)\big)^{p-1}\ \ \text{under}\ \ p\in (1,n),
	\end{equation}
	with equality iff $K$ is a Euclidean ball. Note that \eqref{cap-rn}'s limiting case $p\to 1$ takes the following form (cf. \cite[p.188]{LXZ})
	\begin{equation}
	\label{3.1}
	{\rm Cap}^{Eu}_{1}(K)=nW_1^{Eu}(K).
	\end{equation}
	 Yet, upon defining the $K$'s $j$-th quermassintegral radius $\mathsf{r}_j$ by
	$$
	{W}^{Eu}_j(B(\mathsf{r}_j)) ={W}^{Eu}_j(K),
	 $$
	 we find that (\ref{cap-rn}) is equivalent to
	\begin{equation}\label{3.2}
	\frac{{\rm Cap}^{Eu}_{p}(K)}{{\rm Cap}^{Eu}_{p}\big({\bar B}(\mathsf{r}_2)\big)}\leq \left(\frac{{W}^{Eu}_1(K)}{{W}^{Eu}_1\big({\bar B}(\mathsf{r}_2)\big)}\right)^{2-p}\Leftrightarrow\
    \frac{{\rm Cap}^{Eu}_{p}(K)}{{\rm Cap}^{Eu}_{p}\big(\bar{B}(\mathsf{r}_1)\big)}\leq \left(\frac{{W}^{Eu}_2(K)}{{W}^{Eu}_2\big(\bar{B}(\mathsf{r}_1)\big)}\right)^{p-1}
	\end{equation}

\item Using the inverse mean curvature flow (IMCF) as defined below, we obtain that if $K \subset \mathbb{R}^n\ (n\geq 3)$ is a compact domain with its boundary $\partial K$ being smooth, star-shaped, and mean-convex then (\ref{cap-rn}) holds for ${1<p\leq 2}$, with (\ref{cap-rn})'s equality being valid iff $K$ is a Euclidean ball. Upon sending $p\to 1$ in \eqref{cap-rn}, we get \eqref{3.1}'s weak inequality
\begin{equation}
\label{3.1w}
{\rm Cap}^{Eu}_{1}(K)\le nW_1^{Eu}(K),
\end{equation}
with equality if $K$ is a Euclidean ball.

As a matter of fact, let us consider the following IMCF $$
X:\mathbb{S}^{n-1}\times(0,\infty)\to\mathbb{R}^n\ \ \text{solving}\ \
	\begin{cases}
	\partial_t X = \frac{\mathbf n}{\sigma_1};\\
	X(\mathbb{S}^{n-1},0)=\partial K,
	\end{cases}
$$
	where
	$$
	\begin{cases}
	\mathbf{n}=\text{the outward unit normal of the flow hypersurface $\Sigma_t=X(\mathbb{S}^{n-1},t)$};\\
	(n-1)^{-1}\sigma_1={\rm p}_1=\text{the normalized first mean curvature of $\Sigma_t$}.
	\end{cases}
	$$
	As shown in \cite[Section 3]{LX22} with $p\in (1,\infty)$, we have
	\begin{equation}\label{cap-Tp}
	\begin{cases}
	{\rm Cap}_{p}^{Eu}(K) \leq \left( \int_0^\infty \big(T_p(t)\big)^{\frac{1}{1-p}} \, dt \right)^{1-p};\\
	T_p(t) = \int_{\Sigma_t} \sigma_1^{p-1} \, d\mu_t.
	\end{cases}
	\end{equation}
	Since $1<p\leq 2$, upon applying the H\"older inequality, we get
	\begin{equation}\label{Tp}
	\begin{cases}
	T_p(t) \leq \left(\int_{\Sigma_t} \sigma_1 \, d\mu_t\right)^{p-1} \left(\int_{\Sigma_t}  d\mu_t\right)^{2-p}=\Big(n(n-1){W}^{Eu}_2(K_t)\Big)^{p-1} \big(e^t|\partial K_0|\big)^{2-p};\\
	K_t=\text{the domain enclosed by $\Sigma_t$ with $K_0=K$},
	\end{cases}
	\end{equation}
	where we have used $$|\partial K_t|=|\Sigma_t|=e^t|\partial K_0|.$$ Along the IMCF $\Sigma_t$, the variational formula, together with the Newton-Maclaurin inequality, yields
	\begin{align}\label{dt-dw2}
	\frac{d}{dt} {W}^{Eu}_2(K_t) &= \left(\frac{n-2}{n}\right)\int_{\partial K_t} \left(\frac{\sigma_2}{\binom{n-1}{2} \sigma_1}\right)\, d\mu_t \\
	&\leq \left(\frac{n-2}{n(n-1)^2}\right) \int_{\partial K_t} \sigma_1 \, d\mu_t\notag\\
	&= \left(\frac{n-2}{n-1}\right) {W}^{Eu}_2(K_t).\notag
	\end{align}
	Thus
	
	\begin{equation}\label{W2}
	\begin{cases}
	\frac{d}{dt}\left(e^{-\big(\frac{n-2}{n-1}\big)t}{W}^{Eu}_2(K_t)\right) \leq 0;\\
	{W}^{Eu}_2(K_t)\leq e^{\big(\frac{n-2}{n-1}\big)t}{W}^{Eu}_2(K_0).
	\end{cases}
	\end{equation}
	Via combining (\ref{W2}) with (\ref{cap-Tp})-(\ref{Tp}), we deduce
	\begin{align*}
	{\rm Cap}^{Eu}_{p}(K_0) &\leq \left( \int_0^\infty \big(n(n-1){W}^{Eu}_2(K_t)\big)^{-1} \Big(e^t n{W}^{Eu}_1(K_0)\Big)^{\frac{2-p}{1-p}} \, dt \right)^{1-p}\\
	&= n (n-1)^{p-1} \big({W}^{Eu}_1(K_0)\big)^{2-p}\left( \int_0^\infty \big({W}^{Eu}_2(K_t)\big)^{-1} e^{\frac{2-p}{1-p}t} \, dt \right)^{1-p}\\
	&\leq n (n-1)^{p-1} \big({W}^{Eu}_1(K_0)\big)^{2-p} \big({W}^{Eu}_2(K_0)\big)^{p-1} \left( \int_0^\infty  e^{-\frac{n-p}{(n-1)(p-1)}t} \, dt \right)^{1-p}\\
	&=n \left(\frac{p-1}{n-p}\right)^{1-p} \big({W}^{Eu}_1(K_0)\big)^{2-p} \big({W}^{Eu}_2(K_0)\big)^{p-1},
	\end{align*}
	whose limiting case $p\to 1$ is just
	$$
		{\rm Cap}^{Eu}_{1}(K_0)\le n \big({W}^{Eu}_1(K_0)\big).
		$$
	Moreover, if \eqref{cap-rn}'s equality holds, then by analyzing the equality cases of (\ref{Tp}) and (\ref{dt-dw2}), we see that all points on $\partial K_t$ are umbilical so that $K$ is a Euclidean ball.
	
\end{itemize}

Inspired by not only the above Euclidean case but also \cite[Theorem 3]{LLX}, we turn to the corresponding results in the hyperbolic space $\mathbb{H}^n$ which make the third sharp principle for capacities-to-quermassintegrals as stated below.

\begin{thm}\label{thm3.1} Given $1<p\le 2<n$,
	let $K \subset \mathbb{H}^n$ be a compact domain with its boundary $\partial K$ being smooth, star-shaped, and mean-convex with
	
	\begin{equation}\label{cap-Hn-cor}
	\begin{cases}
	W_1\big(\bar{B}(\mathsf{r}_1)\big)=W_1(K);\\
	W_2\big({\bar B}(\mathsf{r}_2)\big)=W_2(K);\\
	\mathsf{r}_1\leq \mathsf{r}_2\quad\text{ (cf. \cite{BGL19, GL21})}.
	\end{cases}
	\end{equation}

	\begin{itemize}
	\item There holds
	\begin{equation}\label{cap-Hn}
	{{\rm Cap}_{p}(K)}\le{{\rm Cap}_{p}\big({\bar B}(\mathsf{r}_2)\big)} \left(\frac{W_1(K)}{W_1\big({\bar B}(\mathsf{r}_2)\big)}\right)^{2-p},
	\end{equation}
	with equality iff $K$ is a geodesic ball.
	\item There holds
\begin{equation}\label{cap-Hn2}
	{{\rm Cap}_{p}(K)}\le {{\rm Cap}_{p}\big(\bar{B}(\mathsf{r}_1)\big)}\left(\frac{W_2(K)+n^{-1} | {\bar B}(\mathsf{r}_1)|}{W_2\big(\bar{B}(\mathsf{r}_1)\big)+n^{-1} | {\bar B}(\mathsf{r}_1)|}\right)^{p-1},
	\end{equation}
with equality iff $K$ is a geodesic ball.
	
	\end{itemize}
\end{thm}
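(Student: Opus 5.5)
The plan is to transplant the Euclidean IMCF argument given just before the statement into $\mathbb{H}^n$, replacing each explicit exponential by the corresponding quantity along the IMCF of a geodesic ball. Let $\Sigma_t=\partial K_t$ be the IMCF $\partial_t X=\mathbf n/\sigma_1$ starting from $\partial K$. For star-shaped, mean-convex initial data in $\mathbb{H}^n$ this flow exists for all time and stays star-shaped and mean-convex (Gerhardt). First I would invoke the capacity bound of \cite[Theorem 3]{LLX}, the hyperbolic analogue of \eqref{cap-Tp}:
\[
{\rm Cap}_p(K)\le\Big(\int_0^\infty T_p(t)^{\frac1{1-p}}\,dt\Big)^{1-p},\qquad T_p(t)=\int_{\Sigma_t}\sigma_1^{p-1}\,d\mu_t .
\]
For a geodesic ball this is an equality, because the IMCF of $\bar B(r)$ consists of concentric spheres, which are level sets of its $p$-capacity potential. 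Since $1<p\le2$, H\"older's inequality gives, exactly as in \eqref{Tp},
\[
T_p(t)\le\Big(\int_{\Sigma_t}\sigma_1\,d\mu_t\Big)^{p-1}|\Sigma_t|^{2-p}.
\]
Here $|\Sigma_t|=e^t|\partial K|=e^t nW_1(K)$, and by the definition of $W_2$,
\[
\int_{\Sigma_t}\sigma_1\,d\mu_t=(n-1)\int_{\Sigma_t}{\rm p}_1\,d\mu_t=(n-1)\big(nW_2(K_t)+|K_t|\big).
\]

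The second step is to control $\Phi(t)=\int_{\Sigma_t}{\rm p}_1\,d\mu_t$ along the flow. The variation formula with normal speed $f=1/((n-1){\rm p}_1)$ gives
\[
\Phi'(t)=\int_{\Sigma_t}\frac{(n-2){\rm p}_2+1}{(n-1){\rm p}_1}\,d\mu_t .
\]
Newton--Maclaurin (${\rm p}_2\le{\rm p}_1^2$) bounds the first part by $\frac{n-2}{n-1}\Phi$, but leaves the hyperbolic error $\frac1{n-1}\int_{\Sigma_t}{\rm p}_1^{-1}d\mu_t$. This term is the main obstacle, because it has no Euclidean counterpart.

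To handle it, I would not integrate a pure exponential. Instead I would use the monotone quantity along IMCF from the proof of the $W_2$--$W_1$ inequality in \cite{BGL19, GL21}: a suitable combination $Q(t)$ of $\Phi(t)$ and $|\Sigma_t|$, for instance
\[
Q(t)=|\Sigma_t|^{-\frac{n-2}{n-1}}\Big(\Phi(t)-\text{(explicit function of }|\Sigma_t|)\Big),
\]
which is nonincreasing and constant on spheres. I would first try to show directly that the ${\rm p}_1^{-1}$ term is absorbed, using $\int{\rm p}_1^{-1}\ge |\Sigma|^2/\Phi$ together with the Minkowski-type identity for the conformal field $\sinh\rho\,\partial_\rho$. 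The outcome I aim for is the comparison
\[
\Phi(t)\le\Phi_{B}(s(t)),
\]
where $s(t)$ is the radius of the ball whose $\Phi$-value matches that of $K$ at $t=0$, and $s(t)$ is then evolved by the ball's IMCF.

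For \eqref{cap-Hn} I would start this comparison from $\bar B(\mathsf r_2)$. Plugging it and $|\Sigma_t|=e^t nW_1(K)$ into the capacity integral gives
\[
{\rm Cap}_p(K)\le\big(nW_1(K)\big)^{2-p}\Big(\int_0^\infty\big((n-1)\Phi_B(s(t))\big)^{-1}e^{\frac{2-p}{1-p}t}\,dt\Big)^{1-p}.
\]
Factoring out $|\partial\bar B(\mathsf r_2)|^{2-p}$, the remaining expression is exactly the ball computation, which equals ${\rm Cap}_p(\bar B(\mathsf r_2))$ since all inequalities are equalities for balls. Note that the initial matching via $\Phi$ coincides with matching via $W_2$ only up to the $|K|$ term, so I would use $\mathsf r_1\le\mathsf r_2$ and the isoperimetric inequality $|K|\le|\bar B(\mathsf r_1)|$ to start the comparison from $\mathsf r_2$.

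For \eqref{cap-Hn2} I would instead start the comparison from $\bar B(\mathsf r_1)$, matching $|\Sigma_t|$ exactly with the ball's area and factoring out $\Phi$. The ratio $\Phi(0)/\Phi_B(\mathsf r_1)$ then produces $\big(nW_2(K)+|K|\big)/\big(nW_2(\bar B(\mathsf r_1))+|\bar B(\mathsf r_1)|\big)$, and replacing $|K|$ by the larger $|\bar B(\mathsf r_1)|$ yields the stated fraction. For the equality cases, equality forces equality in H\"older and in Newton--Maclaurin for all $t$, so every $\Sigma_t$ is totally umbilic and $K$ is a geodesic ball.
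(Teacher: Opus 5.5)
\textbf{There is a genuine gap.} Your setup matches the paper: the IMCF capacity bound, H\"older's inequality, $|\Sigma_t|=e^t|\partial K|$, $\int_{\Sigma_t}\sigma_1\,d\mu_t=(n-1)(nW_2(K_t)+|K_t|)$, and the final ball computation. The central step is the missing one: a comparison for $\Phi(t)=\int_{\Sigma_t}\mathrm{p}_1\,d\mu_t$ is never established, and as formulated it cannot be.

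\emph{The suggested tools go the wrong way.} The bound $\int_{\Sigma_t}\mathrm{p}_1^{-1}\,d\mu_t\ge|\Sigma_t|^2/\Phi(t)$ estimates the error term from below, whereas you need an upper bound on $\Phi'$.

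\emph{No such upper bound exists in this class.} $\Phi'(0)$ is not bounded by any function of $\Phi(0)$ and $|\partial K|$. Truncate a geodesic ball by an equidistant hypersurface at small distance $\delta$ from a totally geodesic hyperplane, and smooth the edge. The result is convex, hence star-shaped and mean-convex. It has $\mathrm{p}_1=\tanh\delta$ on a face of area bounded below, and $\mathrm{p}_2\ge0$ everywhere, so $\Phi'(0)\ge|\text{face}|/((n-1)\tanh\delta)$, while $\Phi(0)$ and $|\partial K|$ stay bounded. Consequences:
\begin{itemize}
\item No quantity depending only on $\Phi$ and $|\Sigma_t|$, increasing in $\Phi$, can be nonincreasing along IMCF for all such data.
\item The comparison $\Phi(t)\le\Phi_B(s(t))$ started from the $\Phi$-matched radius is false for small $t>0$.
\item Likewise false is the ratio bound $\Phi(t)\le\Phi(0)\Phi_B(r_t)/\Phi_B(\mathsf r_1)$ that your argument for \eqref{cap-Hn2} implicitly requires. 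It would even yield $|K|$ in place of $|\bar B(\mathsf r_1)|$, i.e.\ something stronger than the theorem.
\end{itemize}

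\textbf{The missing idea.} The offending term is exactly $\frac{d}{dt}|K_t|=\int_{\Sigma_t}\frac{d\mu_t}{(n-1)\mathrm{p}_1}$. So differentiate only $W_2(K_t)=n^{-1}(\Phi-|K_t|)$. Newton--Maclaurin then gives
\[
\tfrac{d}{dt}W_2(K_t)\le\tfrac{n-2}{n(n-1)}\big(nW_2(K_t)+|K_t|\big),
\]
with no $\mathrm{p}_1^{-1}$ term. Control $|K_t|$ statically at every time, never through its derivative. Your remark about using isoperimetry at $t=0$ is the right instinct, but it must be applied for all $t$.

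\emph{For \eqref{cap-Hn}.}
\begin{itemize}
\item Use $|K_t|\le f_0\circ f_2^{-1}(W_2(K_t))$, which follows from isoperimetry plus $\mathsf r_1\le\mathsf r_2$ applied to $K_t$; these hypersurfaces stay star-shaped and mean-convex.
\item This gives the autonomous inequality $\frac{d}{dt}W_2\le\frac{n-2}{n(n-1)}J(W_2)$, where $J(s)=\int_{\partial\bar B(f_2^{-1}(s))}\mathrm{p}_1\,d\mu$.
\item ODE comparison yields $W_2(K_t)\le W_2(\bar B(r_t))$ with $\sinh r_t=e^{t/(n-1)}\sinh\mathsf r_2$.
\item Hence $\Phi(t)\le J(W_2(K_t))\le\omega_{n-1}\sinh^{n-2}r_t\cosh r_t$. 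This is precisely the comparison from $\mathsf r_2$ you wanted, and your final computation then applies.
\end{itemize}

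\emph{For \eqref{cap-Hn2}.}
\begin{itemize}
\item Integrate the linear inequality for $W_2(K_t)$.
\item Insert $|K_\tau|\le|\bar B(r_\tau)|$ with $\sinh r_\tau=e^{\tau/(n-1)}\sinh\mathsf r_1$ at every $\tau$. This is why $|\bar B(\mathsf r_1)|$, not $|K|$, appears in the final fraction.
\item Evaluate the resulting integrals explicitly.
\item Finish with $\cosh x\ge\cosh\mathsf r_1$ together with $W_2(K)\ge W_2(\bar B(\mathsf r_1))$. The latter is where $\mathsf r_1\le\mathsf r_2$ enters.
\end{itemize}
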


\begin{proof} A consideration of the following IMCF
	$$X:\mathbb{S}^{n-1}\times(0,\infty)\to\mathbb{H}^n\ \ \text{solving}\ \
	\begin{cases}
	\partial_t X = \frac{\mathbf{n}}{\sigma_1}=\frac{\mathbf{n}}{(n-1)\mathsf{p}_1}; \\
	X(\mathbb{S}^{n-1},0)= \partial K,
	\end{cases}
	$$
	gives (cf. \cite{LLX})
	\begin{equation}\label{cap-Tp-Hn}
	\begin{cases}
	{\rm Cap}_{p}(K) \leq \left( \int_0^\infty \big(T_p(t)\big)^{\frac{1}{1-p}} \, dt \right)^{1-p};\\
	T_p(t) = \int_{\partial K_t} \sigma_1^{p-1} \, d\mu_t=\int_{\partial K_t} \big((n-1){\rm p}_1\big)^{p-1} \, d\mu_t.
	\end{cases}
	\end{equation}
	Thanks to $1<p\leq 2<n$, the H\"older inequality, plus
	$$
	|\partial K_t|=e^t|\partial K_0|,
	$$
	 implies
	\begin{align}\label{Tp-Hn}
	T_p(t)&\leq \left(\int_{\partial K_t} \big((n-1){\rm p}_1\big) \, d\mu_t\right)^{p-1} \left(\int_{\partial K_t}  d\mu_t\right)^{2-p}\\
	&=\left(n(n-1)W_2(K_t)+(n-1)|K_t|\right)^{p-1} \big(e^t|\partial K_0|\big)^{2-p}.\notag
	\end{align}
	\medskip

	Along IMCF, the variational formula together with the Newton-Maclaurin inequality, yields
	\begin{align}\label{dt-dw2-Hn}
	\frac{d}{dt} W_2(K_t) &= \left(\frac{n-2}{n}\right) \int_{\partial K_t} \frac{\sigma_2}{\binom{n-1}{2} \big((n-1){\rm p}_1\big)} \, d\mu_t \\
	&\leq \left(\frac{n-2}{n(n-1)^2}\right)\int_{\partial K_t} \big((n-1){\rm p}_1\big) \, d\mu_t\notag\\
	&= \left(\frac{n-2}{n-1}\right) W_2(K_t)+\left(\frac{n-2}{n(n-1)}\right)|K_t|,\notag
	\end{align}
	where $\sigma_2$ stands for the second mean curvature.
	
	By the result of Gerhardt \cite{Ge11}, the flow hypersurface $\partial K_t$ of IMCF remains star-shaped, mean-convex, expands to infinity, and the principal curvatures $\kappa_i$ decay to $1$ exponentially as $t\to\infty$.
\subsubsection*{\underline{Argument for \eqref{cap-Hn}}}
Via letting
	$$I(s)\equiv f_0\circ f_2^{-1}(s)\ \ \forall\ \ s>0,
	$$ we obtain
	\begin{align*}
	\frac{d}{dt} W_2(K_t)\leq \left(\frac{n-2}{n-1}\right) W_2(K_t)+\left(\frac{n-2}{n(n-1)}\right)I\big(W_2(K_t)\big).
	\end{align*}
	Notice that
	$$r=f_2^{-1}(s)\Rightarrow
	\begin{cases}
	s=W_2\big({\bar B}(r)\big)=n^{-1}\Big(\omega_{n-1}\sinh^{n-2} r \cosh r-|\bar{B}(r)|\Big);\\
	I(s)=|{\bar B}(r)|=\omega_{n-1}\sinh^{n-2} r \cosh r-ns.
	\end{cases}
	$$
	So, upon setting
	$$
	\begin{cases} q(r)\equiv\int_{\partial {\bar B}(r)}{\rm p}_1\, d\mu;\\
	J(s)\equiv q\circ f_2^{-1}(s),
	\end{cases}
	$$ we have
	\begin{align}\label{dt-dw2-Hn-2}
	\frac{d}{dt} W_2(K_t)\leq \left(\frac{n-2}{n(n-1)}\right) J\big(W_2(K_t)\big).
	\end{align}
	
	Fixed $s_0>0$, define
	\begin{align*}
	\zeta(s)\equiv\int_{s_0}^s\frac{d\tau}{\left(\frac{n-2}{n(n-1)}\right)J(\tau)}\quad\forall\ \  s>0.
	\end{align*}
	It follows from (\ref{dt-dw2-Hn-2}) that
	\begin{align}\label{dt-dw2-Hn-3}
	\begin{cases}
	\frac{d}{dt}(\zeta\big(W_2(K_t)\big)-t)=\frac{\frac{d}{dt} W_2(K_t)}{\left(\frac{n-2}{n(n-1)}\right) J\big(W_2(K_t)\big)}-1\leq0;\\
	W_2(K_t)\leq \zeta^{-1}(t+\zeta(W_2(K_0))).
	\end{cases}
	\end{align}
	Substituting the above inequality into (\ref{Tp-Hn}), we obtain
	\begin{align*}
	T_p(t) &\leq \left((n-1)J\big(W_2(K_t)\big)\right)^{p-1} (e^t nW_1(K_0))^{2-p}\\
	&\leq (n-1)^{p-1} (nW_1(K_0))^{2-p} e^{(2-p)t}  \left(J\circ  \zeta^{-1}(t+\zeta(W_2(K_0)))\right)^{p-1},
	\end{align*}
whence
	\begin{align}\label{1.3a}
	{\rm Cap}_{p}(K)
	\leq (n-1)^{p-1} (nW_1(K_0))^{2-p} \left( \int_0^\infty \frac{e^{\frac{2-p}{1-p} t} \, dt}{J\circ  \zeta^{-1}(t+\zeta(W_2(K_0)))} \right)^{1-p}.
	\end{align}
	
In order to simplify the function $\zeta$, we consider the evolution of $\partial {\bar B}(r_0)$ along IMCF: the flow hypersurface $\partial ({\bar B}(r_0))_t$ not only remains a geodesic sphere $\partial  {\bar B}(r_t)$ but also satisfies $$|\partial  {\bar B}(r_t)|=e^t|\partial {\bar B}(r_0)|.$$ Then
	$$\sinh r_t=e^{\frac{t}{n-1}}\sinh r_0.$$
	Since the geodesic sphere achieves not only (\ref{dt-dw2-Hn})'s equality but also (\ref{dt-dw2-Hn-3})'s equality, we obtain
	\begin{align*}
	W_2( {\bar B}(r_t))= \zeta^{-1}(t+\zeta(W_2({\bar B}(r_0)))).
	\end{align*}
	Via taking $$r_0\equiv f_2^{-1}(W_2(K_0)) - \text{i.e.} - W_2({\bar B}(r_0))=W_2(K_0),
	$$
	we get not only
	\begin{align*}
	f_2(r_t)=\zeta^{-1}(t+\zeta(W_2(K))),
	\end{align*}
	but also
	\begin{align*}
	J\circ  \zeta^{-1}(t+\zeta(W_2(K_0)))
	&=q(r_t)\\
	&=\omega_{n-1}(\sinh^{n-2} r_t)(\cosh r_t)\\
	&=\omega_{n-1} e^{\big(\frac{n-2}{n-1}\big)t}(\sinh^{n-2}r_0) \sqrt{e^{\frac{2t}{n-1}}\sinh^2 r_0+1}.
	\end{align*}
	Substituting the above into the integral in (\ref{1.3}) yields
	\begin{align*}
	\int_0^\infty \frac{e^{\frac{2-p}{1-p} t} \, dt}{J\circ  \zeta^{-1}(t+\zeta(W_2(K_0)))}
	&=\int_0^\infty \frac{e^{\frac{2-p}{1-p} t} \, dt}{\omega_{n-1} e^{\big(\frac{n-2}{n-1}\big)t} \sinh^{n-2}r_0 \sqrt{e^{\frac{2t}{n-1}}\sinh^2 r_0+1}} \\
	&=\Big({\omega_{n-1}\sinh^{n-2} r_0}\Big)^{-1} \int_0^\infty \frac{e^{-\frac{n-p}{(n-1)(p-1)} t} \, dt}{ \sqrt{e^{\frac{2t}{n-1}}\sinh^2 r_0+1}} \\
	&=\left(\frac{n-1}{\omega_{n-1}}\right)(\sinh r_0)^{\frac{(n-1)(2-p)}{p-1}} \int_{r_0}^\infty (\sinh \alpha)^{-\frac{n-1}{p-1}}d\alpha,
	\end{align*}
	where we have used the change of variables $$\sinh \alpha=(\sinh r_0) e^{\frac{t}{n-1}}.$$
	
	By (\ref{1.3}), we obtain
	\begin{align*}
&	{\rm Cap}_{p}(K)\\
	&\ \leq (n-1)^{p-1} (nW_1(K_0))^{2-p} \left( \left(\frac{n-1}{\omega_{n-1}}\right)(\sinh r_0)^{\frac{(n-1)(2-p)}{p-1}} \int_{r_0}^\infty (\sinh \alpha)^{-\frac{n-1}{p-1}}d\alpha \right)^{1-p}\\
	&\ =  \left(\frac{W_1(K_0)}{\frac{\omega_{n-1}}{n}\sinh^{n-1}r_0}  \right)^{2-p} \left( \int_{r_0}^\infty (\omega_{n-1}\sinh^{n-1} \alpha)^{\frac{1}{1-p}}d\alpha \right)^{1-p}\\
	&\ = \left(\frac{W_1(K_0)}{f_1\circ f_2^{-1}(W_2(K_0))}  \right)^{2-p} g_p\circ f_2^{-1}(W_2(K_0)).
	\end{align*}
	Now, upon noting $r_0=\mathsf{r}_2$, we obtain (\ref{cap-Hn}).
	
Last but not least, if (\ref{cap-Hn}) or (\ref{cap-Hn2})'s equality holds, then by analyzing the equality cases of (\ref{Tp-Hn}) \& (\ref{dt-dw2-Hn}), we see that all points on $\partial K_t$ are umbilical. Hence, $K$ is a geodesic ball. Conversely, if $K$ is a geodesic ball, then (\ref{cap-Hn})'s equality is automatically true.

\subsubsection*{\underline{Argument for \eqref{cap-Hn2}}}

We start the proof again from \eqref{dt-dw2-Hn}. A direct integration indicates
\begin{equation}\label{eq417}
W_2(K_t)\leq e^{\big(\frac{n-2}{n-1}\big)t}\left(W_2(K_0)+\bigg(\frac{n-2}{n(n-1)}\bigg)\int_0^t e^{-\big(\frac{n-2}{n-1}\big)\tau}|K_\tau|\,d\tau\right)
\end{equation}
Let $r_\tau$ be the area radius of $\partial K_\tau$ in the sense of $$|\partial K_\tau|=|\partial {\bar B}(r_\tau)|.$$ Because of
$$|\partial K_\tau|=e^\tau|\partial K_0|=e^\tau|\partial  {\bar B}(\mathsf{r}_1)|,$$ we get
$$
\sinh r_\tau=e^{\frac{\tau}{n-1}}\sinh \mathsf{r}_1\ \ \&\ \ r_0=\mathsf{r}_1.
$$
Upon recalling the isoperimetric inequality in $\mathbb H^n$ as seen below
$$
|K_\tau|\leq | {\bar B}(r_\tau)|,
$$ we derive
\begin{align}\label{eq418}
  &\int_0^t e^{-\big(\frac{n-2}{n-1}\big)\tau}|K_\tau|\,d\tau\\
   &\ \ \leq\int_0^t e^{-\big(\frac{n-2}{n-1}\big)\tau} | {\bar B}(r_\tau)|\,d\tau\notag\\ &\ \ =(n-1)\int_{\mathsf{r}_1}^{r_t}\bigg(\frac{\sinh \mathsf{r}_1}{\sinh y}\bigg)^{n-2}| {\bar B}(y)|\coth y\,dy\notag
  \\
  &\ \ =\left(\frac{n-1}{2-n}\right)(\sinh \mathsf{r}_1)^{n-2}\left(| {\bar B}(y)|(\sinh y)^{2-n}\Big|_{\mathsf{r}_1}^{r_t}-\int_{\mathsf{r}_1}^{r_t}(\sinh y)^{2-n}\omega_{n-1}(\sinh y)^{n-1}\,dy\right)\notag
  \\ &\ \ =\left(\frac{n-1}{n-2}\right)\left(| {\bar B}(\mathsf{r}_1)|-\bigg(\frac{\sinh \mathsf{r}_1}{\sinh r_t}\bigg)^{n-2}| {\bar B}(r_t)|+\omega_{n-1}(\sinh \mathsf{r}_1)^{n-2}(\cosh r_t-\cosh \mathsf{r}_1)\right)\notag
  \\ &\ \  \equiv\left(\frac{n-1}{n-2}\right) H(r_t).\notag
  \end{align}
Via changing the variables
$$\sinh x=e^{\frac t{n-1}}\sinh \mathsf{r}_1\ \ (\text{or}\ x=r_t),$$ we have
\begin{align}\label{eq419}
& \int_0^\infty \big(T_p(t)\big)^{\frac{1}{1-p}}\,dt\\
  &\ \  \geq \int_0^\infty\left( \frac{(|\partial K_t|)^{\frac{2-p}{1-p}}}{n(n-1)W_2(K_t)+(n-1)|K_t|}\right)\,dt\notag
  \\
  &\ \ \geq\int_0^\infty \left(\frac{(|\partial  {\bar B}(r_t)|)^{\frac{2-p}{1-p}}}{n(n-1)e^{\big(\frac{n-2}{n-1}\big)t}W_2(K_0)+(n-1)e^{\big(\frac{n-2}{n-1}\big)t}H(r_t)+(n-1)| {\bar B}(r_t)|}\right)\,dt\notag
  \\
  &\ \ =\int_{\mathsf{r}_1}^\infty \left(\frac{|\partial  {\bar B}(x)|^{\frac{2-p}{1-p}}\cdot (n-1)\coth x}{n(n-1)\Big(\frac{\sinh x}{\sinh \mathsf{r}_1}\Big)^{n-2}W_2(K_0)+(n-1)\Big(\frac{\sinh x}{\sinh \mathsf{r}_1}\Big)^{n-2}H(x)+(n-1)| {\bar B}(x)|}\right)\,dx\notag
  \\
  &\ \ =\int_{\mathsf{r}_1}^\infty \left(\frac{|\partial  {\bar B}(x)|^{\frac{2-p}{1-p}}\cdot\coth x}{(\sinh x)^{n-2} \left(\frac{nW_2(K_0)+| {\bar B}(\mathsf{r}_1)|}{(\sinh \mathsf{r}_1)^{n-2}} + \omega_{n-1} (\cosh x - \cosh \mathsf{r}_1) \right)}\right)\,dx\notag
  \\
  &\ \ = \int_{\mathsf{r}_1}^\infty\left(\frac{(\omega_{n-1})^{\frac{1}{1-p}}(\sinh x)^{\frac{n-1}{1-p}}}{1+\frac{nW_2(K_0)-nW_2\big(\bar{B}(\mathsf{r}_1)\big)}{\omega_{n-1}(\sinh \mathsf{r}_1)^{n-2}\cosh x}}\right)\,dx, \notag
  \end{align}
where we have used $$
W_2\big(\bar{B}(\mathsf{r}_1)\big)=n^{-1}\int_{\partial  {\bar B}(\mathsf{r}_1)}{\rm p}_1\,d\mu-n^{-1}W_0\big(\bar{B}(\mathsf{r}_1)\big)=\frac{\omega_{n-1}}{n}(\sinh \mathsf{r}_1)^{n-2}\cosh \mathsf{r}_1-n^{-1}| {\bar B}(\mathsf{r}_1)|.
$$

Finally, due to \eqref{cap-Hn-cor} we get
$$
\begin{cases}
	\cosh x\geq \cosh \mathsf{r}_1;\\ W_2(K_0)=W_2\big({\bar B}(\mathsf{r}_2)\big)\geq W_2\big(\bar{B}(\mathsf{r}_1)\big),
\end{cases}
$$
thereby deriving
\begin{align*}
{\rm Cap}_{p}(K) & \leq \left( \int_0^\infty \big(T_p(t)\big)^{\frac{1}{1-p}} \, dt \right)^{1-p}
\\ & \leq \left( \int_{\mathsf{r}_1}^\infty \left(\frac{(\omega_{n-1})^{\frac{1}{1-p}}(\sinh x)^{\frac{n-1}{1-p}}}{1+\frac{nW_2(K_0)-nW_2\big(\bar{B}(\mathsf{r}_1)\big)}{\omega_{n-1}(\sinh \mathsf{r}_1)^{n-2}(\cosh \mathsf{r}_1)}}\right)\,dx \right)^{1-p}\notag
\\ &={\rm Cap}_{p}\big(\bar{B}(\mathsf{r}_1)\big)\left(1+\frac{nW_2(K_0)-nW_2\big(\bar{B}(\mathsf{r}_1)\big)}{\omega_{n-1}(\sinh \mathsf{r}_1)^{n-2}(\cosh \mathsf{r}_1)}\right)^{p-1}\notag
\\ &={\rm Cap}_{p}\big(\bar{B}(\mathsf{r}_1)\big)\left(\frac{nW_2(K_0)+| {\bar B}(\mathsf{r}_1)|}{nW_2\big(\bar{B}(\mathsf{r}_1)\big)+| {\bar B}(\mathsf{r}_1)|}\right)^{p-1},\notag
\end{align*}
as desired in (\ref{cap-Hn2}) whose equality case can be checked in a way similar to verifying (\ref{cap-Hn})'s equality case.

\end{proof}

\begin{rem} Two comments are in order.
\begin{itemize}
\item Just like \eqref{cap-rn} which can be treated as a Euclidean capacity uncertainty principle, a multiplication of \eqref{cap-Hn} \& \eqref{cap-Hn2} produces the following hyperbolic capacity uncertainty principle

\begin{equation}\label{4.21b}
\frac{\big({\rm Cap}_{p}(K)\big)^2}{{{\rm Cap}_{p}\big({\bar B}(\mathsf{r}_2)\big)}    {{\rm Cap}_{p}\big({\bar B}(\mathsf{r}_1)\big)}   }\le \left(\frac{W_1(K)}{W_1\big({\bar B}(\mathsf{r}_2)\big)}\right)^{2-p} \left(\frac{W_2(K)+n^{-1} | {\bar B}(\mathsf{r}_1)|}{W_2\big(\bar{B}(\mathsf{r}_1)\big)+n^{-1} | {\bar B}(\mathsf{r}_1)|}\right)^{p-1},
\end{equation}
with equality if $K$ is a geodesic ball.

\item Upon letting $p\to 1$ in \eqref{cap-Hn2} or \eqref{4.21b}, we find
	\begin{equation}\label{cap-Hn2w}
	{{\rm Cap}_{1}(K)}\le{{\rm Cap}_{1}\big(\bar{B}(\mathsf{r}_1)\big)}=nW_1\big(\bar{B}(\mathsf{r}_1)\big)=nW_1(K)=|\partial K|,
	\end{equation}
	with its inequality becoming an equality whenever $K$ is a geodesic ball in $\mathbb H^n$. Clearly, \eqref{cap-Hn2w} nicely corresponds to \eqref{3.1w}.
\end{itemize}

\end{rem}

\section{Capacity bounds through $W_1$, $W_2+n^{-1}W_0$, and effective curvature radii}\label{s5}

 \cite{LLX} contains two important inequalities on ${\rm Cap}_{p\in (1,3]}(K)$ for any compact domain $K\subset\mathbb H^n$ with its boundary $\partial K$ being smooth, star-shaped, and mean-convex as explained below.

\begin{itemize}
\item If $p=2$ \& $\mathsf I$ is the isoperimetric function defined by
$$|{\bar B}(r)| = {\mathsf I}\big(|\partial{\bar B}(r)|\big),
$$
then

		\begin{align}\label{cap-2}
		{\rm Cap}_2(K)
		\leq  \frac{n(n-1)}{ \left(\int_0^\infty \Biggl( e^{\big(\frac{n-2}{n-1}\big)t} \biggl( W_2(K) + \frac{\int_0^t e^{-\big(\frac{n-2}{n-1}\big)\tau}{\mathsf I}(e^\tau |\partial K|)  d\tau}{\left(\frac{n-2}{n(n-1)}\right)^{-1}}\biggr) + n^{-1}{\mathsf I}(e^t |\partial K|) \Biggr)^{-1} dt\right)}.
		\end{align} 

Below are some observations on \eqref{cap-2}.

\begin{itemize}
\item If $n=2$, \eqref{cap-2} reduces to
\begin{equation}\label{cap2-H2}
{\rm Cap}_2(K) \leq \frac{2\pi}{\text{arsinh}(2\pi |\partial K|^{-1})},
\end{equation}
with equality iff $K$ is a geodesic ball.
In fact, (\ref{cap2-H2}) is equivalent to
\begin{equation}\label{cap-Hn2a-new}
	{{\rm Cap}_{2}(K)}\le {{\rm Cap}_{2}\big(\bar{B}(\mathsf{r}_1)\big)},
\end{equation}
where $W_1(\bar{B}(\mathsf{r}_1))=W_1(K)$.
For $n\ge 3$, however, the sharp $W_1$-comparison \eqref{cap-Hn2a-new} is not available at present. From \eqref{cap-Hn2} one only obtains the weaker estimate
\begin{equation}\label{cap-Hn2a-high}
	\frac{{{\rm Cap}_{2}(K)}}{{{\rm Cap}_{2}\big(\bar{B}(\mathsf{r}_1)\big)}}  \le
	\frac{W_2(K)+n^{-1} | {\bar B}(\mathsf{r}_1)|}
	{W_2\big(\bar{B}(\mathsf{r}_1)\big)+n^{-1} | {\bar B}(\mathsf{r}_1)|}.
\end{equation}

\item Upon recalling that for any smooth, star-shaped, and mean-convex hypersurface $\partial K$ in $\mathbb{H}^{n}$ there holds
\begin{align*}
W_1(K)=n^{-1}|\partial K|\underset{R}{\lesssim} W_2(K),
\end{align*}
we use (\ref{cap-2}) to conclude
\begin{align}
{\rm Cap}_{2}(K) \underset{R}{\lesssim} W_2(K)\quad\text{under}\quad n\geq 3.
\end{align}
\end{itemize}

\item If $1<p\le 3=n$, then
\begin{equation}\label{333}{\tiny
{\rm Cap}_p(K)\leq\frac{ (16\pi)^{\frac{p-1}{2}}|\partial K|^{\frac{3-p}{2}}}{\left( \int_0^\infty \left( \Bigl( \int_{\partial K} ({\rm p}_1^2-1) \, d\mu - 1 \Bigr) e^{-t} + (4\pi)^{-1}{|\partial K| e^{t}} + 1 \right)^{-\frac{1}{2}} \frac{e^{-\big(\frac{3-p}{2(p-1)}\big)t}}{4\pi} \, dt \right)^{p-1}},}
\end{equation}
with equality iff $K$ is a geodesic ball.

Below are two observations around \eqref{333}.

\begin{itemize}
 \item If $p=2<3=n$,  
\begin{equation}\label{3333}{\tiny
\text{Cap}_2(K) \leq \sqrt{16\pi |\partial K|} \left( \int_0^\infty \left( (4\pi)^{-1}\int_{\partial K} ({\rm p}_1^2-1) \, d\mu - 1  + (4\pi)^{-1}{|\partial K| e^{2t}}+ e^t \right)^{-\frac{1}{2}}  \, dt \right)^{-1},}
\end{equation}
with equality iff  $K$ is a geodesic ball.
\item If $n\ge 3$, then from \cite{Hu18} it follows that not only any star-shaped and mean-convex hypersurface $\Sigma\subset\mathbb{H}^n$ satisfies

\begin{equation}\label{will-Hn}
\int_{\Sigma} ({\rm p}_1^2 - 1) \, d\mu \geq \omega_{n-1}^{\frac{2}{n-1}} |\Sigma|^{\frac{n-3}{n-1}},
\end{equation}
with equality iff $\Sigma$ is a geodesic sphere, but also the squared mean curvature difference
\begin{equation}\label{Q}
Q(t)=|\Sigma_t|^{-\frac{n-3}{n-1}}\int_{\Sigma_t} ({\rm p}_1^2 - 1) \, d\mu_t
\end{equation}
is decreasing along IMCF.
\end{itemize}

\end{itemize}

The foregoing discussion, together with the effective curvature radii introduced below and the identities
$$
\begin{cases}
W_1(K)=n^{-1}|\partial K|;\\
n^{-1}\int_{\partial K}{\rm p}_1\,d\mu=W_2(K)+n^{-1}W_0(K),
\end{cases}
$$
leads to the fourth family of sharp capacity comparisons. Besides the quermassintegral bounds, the resulting estimates admit a geometric formulation in terms of the capacity-to-area ratios of geodesic balls; the case $n=3=p+1$ is of independent interest in the physical hyperbolic space $\mathbb H^3$.

\begin{thm}\label{thm3.3} Let $m\in\mathbb N$ and let $K$ be a compact subdomain of $\mathbb H^{n\ge 3}$.
\begin{itemize}
\item If $\partial K$ is smooth horospherically convex with
\begin{equation*}
	\begin{cases}
	\varepsilon=\Big({\int_{\partial K}(1+{\rm p}_1)\, d\mu}\Big)^{-1}{\sqrt{\left|\left(\int_{\partial K}{\rm p}_1\, d\mu\right)^2-|\partial K|\int_{\partial K}{\rm p}_2\, d\mu\right|}};\\
	{\rm RCap}_p(K)=r\ \ \text{solves}\ \ {\rm Cap}_p(K)={\rm Cap}_p\big(\bar{B}(r)\big),
	\end{cases}
	\end{equation*}
	then
	\begin{equation}\label{cap2-H3}
\begin{cases}
	{\rm Cap}_{2}(K) \leq  \left(\frac{2\varepsilon}{\ln\frac{1+\varepsilon}{1-\varepsilon}}\right)\int_{\partial K}(1+{\rm p}_1)d\mu&\text{as}\ \ p=2=n-1;\\
	{\rm RCap}_p(K)\le(n-1)^{-1}\ln\left(\omega_{n-1}^{-1}\int_{ \partial K}\Big(1+{\rm p}_1\Big)^{n-1}\,d\mu\right)&\text{as}\ \ 1<p<\infty,
	\end{cases}
	\end{equation}
	with equality iff $K$ is a geodesic ball.

	\item Suppose that $\partial K$ is smooth, star-shaped, and mean-convex. For every $q\ge 2$, define the $L^q$-mean-curvature radius $\rho_q(K)\in(0,\infty)$ by
	$$
	\frac{\displaystyle \int_{\partial \bar B(\rho_q(K))}{\rm p}_1^q\,d\mu}{|\partial \bar B(\rho_q(K))|}
	=
	\frac{\displaystyle \int_{\partial K}{\rm p}_1^q\,d\mu}{|\partial K|}.
	$$
	If $p\in [3,\infty)$, then
	\begin{equation}\label{cap-will-Hn}
	\frac{{\rm Cap}_{p}(K)}{|\partial K|}
	\le
	\frac{{\rm Cap}_{p}\big(\bar B(\rho_{p-1}(K))\big)}
	{|\partial \bar B(\rho_{p-1}(K))|}
	<
	\frac{\displaystyle \int_{\partial K}\left(\frac{\sigma_1}{p-1}\right)^{p-1}\,d\mu}{|\partial K|},
	\end{equation}
	with equality in the first inequality iff $K$ is a geodesic ball.

	\item Suppose that $\partial K$ is smooth, star-shaped, and h-convex. For every $q\ge 2$, define the $q$-curvature-excess radius $\widehat{\rho}_q(K)\in(0,\infty)$ by
	$$
	\frac{\displaystyle \int_{\partial \bar B(\widehat{\rho}_q(K))}({\rm p}_1^2-1)^{q/2}\,d\mu}{|\partial \bar B(\widehat{\rho}_q(K))|}
	=
	\frac{\displaystyle \int_{\partial K}({\rm p}_1^2-1)^{q/2}\,d\mu}{|\partial K|}.
	$$
	\begin{itemize}
	\item If $1<p\le 2m+1$, then
	\begin{equation}\label{cap-will-Hn2}
\frac{{\rm Cap}_{p}(K)}{|\partial K|}
\le
\frac{{\rm Cap}_{p}\big(\bar B(\widehat{\rho}_{2m}(K))\big)}
{|\partial\bar B(\widehat{\rho}_{2m}(K))|}.
\end{equation}
with equality iff $K$ is a geodesic ball. Moreover, if $m=1$, then the condition ``h-convex'' can be weakened to ``mean-convex'' and
	$$
	{\rm Cap}_{p}(K)
	\le
	|\partial K|^{\frac{3-p}{2}}
	\left(
	\int_{\partial K}\Big(\frac{\sigma_1}{p-1}\Big)^2\,d\mu
	\right)^{\frac{p-1}{2}}.
	$$
\item As a consequence of \eqref{cap-will-Hn}, if $p>2m+1\ge3$, set $\|{\rm p}_1\|_\infty=\max_{\partial K}{\rm p}_1$ and define the interpolating curvature-excess radius $\rho_{p,m}(K)\in(0,\infty)$ by
\[
\frac{\displaystyle \int_{\partial \bar B(\rho_{p,m}(K))}({\rm p}_1^2-1)^{\frac{p-1}{2}}\,d\mu}{|\partial \bar B(\rho_{p,m}(K))|}
=
\frac{\displaystyle \int_{\partial K}
\big(\|{\rm p}_1\|_\infty^2-1\big)^{\frac{p-1-2m}{2}}
({\rm p}_1^2-1)^m\,d\mu}{|\partial K|}.
\]
Then
\begin{equation}\label{4.11c}
\frac{{\rm Cap}_{p}(K)}{|\partial K|}
\le
\frac{{\rm Cap}_{p}\big(\bar B(\rho_{p,m}(K))\big)}
{|\partial\bar B(\rho_{p,m}(K))|},
\end{equation}
with equality iff $K$ is a geodesic ball.
	\end{itemize}

\par
	\end{itemize}
\end{thm}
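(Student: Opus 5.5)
The plan is to treat the three bullets separately, each time starting from an upper bound for ${\rm Cap}_p(K)$ already available in the paper and then turning it into a geodesic-ball comparison.

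\textbf{First bullet (h-convex case).} For the second line of \eqref{cap2-H3} I will use \eqref{cap-rV}, i.e. ${\rm Cap}_p(K)\le{\rm Cap}_p(\bar B(\mathsf r_\infty))$ with $\mathsf r_\infty=(n-1)^{-1}\ln{\rm RV}(K)$. Since the ball capacity increases with the radius, it suffices to show ${\rm RV}(K)\le\omega_{n-1}^{-1}\int_{\partial K}(1+{\rm p}_1)^{n-1}d\mu$. This follows from the pointwise Maclaurin bound ${\rm p}_i\le{\rm p}_1^i$, valid because h-convexity gives $\kappa_j\ge1>0$, inserted into $\sum_i\binom{n-1}{i}{\rm p}_i$. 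Equality forces umbilicity, hence a geodesic ball.

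For the first line ($n=3$, $p=2$) I will use \eqref{1.3} together with the Steiner formula \eqref{1.2}: $|\partial K_t|=A\cosh^2t+2B\cosh t\sinh t+C\sinh^2t$, where $A=|\partial K|$, $B=\int{\rm p}_1$, $C=\int{\rm p}_2$. Then I compute $\int_0^\infty|\partial K_t|^{-1}dt$ explicitly. The substitution $u=\tanh t$ gives $\int_0^1 du/(A+2Bu+Cu^2)$, a rational integral whose discriminant is $B^2-AC$. Its value is a logarithm, which after normalization produces $\frac{1}{2\varepsilon(A+B)}\ln\frac{1+\varepsilon}{1-\varepsilon}$; the arctan case is absorbed by the absolute value. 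This term-by-term matching against the stated formula is the step I expect to be fiddliest. Equality again forces umbilicity.

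\textbf{Second bullet ($p\ge3$, mean-convex).} The starting point is \eqref{cap-Tp-Hn}, with $T_p(t)=\int_{\Sigma_t}\sigma_1^{p-1}$ along the IMCF. I will show that the normalized $L^{p-1}$-average $|\Sigma_t|^{-1}\int_{\Sigma_t}{\rm p}_1^{p-1}$ is bounded by the corresponding quantity on the comparison sphere flowing by IMCF from $\partial\bar B(\rho_{p-1}(K))$. The natural route is a monotonicity formula generalizing \eqref{Q}: differentiate $\int{\rm p}_1^{q}$ along IMCF, using $\partial_t H=-\Delta(1/H)-(|A|^2-(n-1))/H$, and check that for $q\ge2$ the excess over the sphere's evolution is nonpositive, via $|A|^2\ge(n-1){\rm p}_1^2$ and integration by parts. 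Once this holds for every $t$, $T_p(t)\le|\partial K|e^t\,(n-1)^{p-1}\times$(sphere average at matched area growth). The integral in \eqref{cap-Tp-Hn} then equals the sphere's, rescaled by the area ratio, which gives the first inequality of \eqref{cap-will-Hn}. The second, strict inequality comes from $\int_0^\infty T_p^{1/(1-p)}dt\ge T_p(0)^{1/(1-p)}\int_0^\infty e^{-t/(p-1)}dt$ (using $T_p(t)\le e^tT_p(0)$ from the same monotonicity), together with $\sigma_1/(p-1)$ scaling; strictness holds because the sphere itself does not saturate this crude bound. \emph{The main obstacle} is the monotonicity of the $L^{q}$ average for general $q\ge2$; I would first try to imitate Hu's proof of \eqref{Q}.

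\textbf{Third bullet (h-convex, curvature excess).} Here I apply the same scheme to $({\rm p}_1^2-1)^{q/2}$. For $p\le2m+1$, Hölder gives $\int{\rm p}_1^{p-1}\le\ldots$ in terms of $\int({\rm p}_1^2-1)^m$ and area. Monotonicity of $|\Sigma_t|^{-1}\int({\rm p}_1^2-1)^m$ uses h-convexity, which is preserved along IMCF. For $m=1$ this is exactly \eqref{Q}, which explains why mean-convexity suffices there; the explicit bound in that case follows from Hölder with exponent $2/(p-1)$ and $T_2(t)\le e^tT_2(0)$. For $p>2m+1$, I bound $({\rm p}_1^2-1)^{(p-1)/2}\le(\|{\rm p}_1\|_\infty^2-1)^{(p-1-2m)/2}({\rm p}_1^2-1)^m$ pointwise and feed this into the second-bullet argument, which yields \eqref{4.11c}. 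In all cases equality forces umbilicity.
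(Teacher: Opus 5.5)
\textbf{Main gap: the $n=3$, $p=2$ bound.} Your reduction is correct up to the rational integral. By \eqref{1.3}, the Steiner formula and $u=\tanh t$, one gets ${\rm Cap}_2(K)\le\bigl(\int_0^1\frac{du}{A+2Bu+Cu^2}\bigr)^{-1}$. For $D=B^2-AC>0$ this is exactly $\frac{2\varepsilon}{\ln\frac{1+\varepsilon}{1-\varepsilon}}(A+B)$. The gap is your claim that the arctan case is ``absorbed by the absolute value''. It is not. If $D<0$, the same integral equals $\frac{\arctan\varepsilon}{\varepsilon(A+B)}$, so \eqref{1.3} only gives ${\rm Cap}_2(K)\le\frac{\varepsilon}{\arctan\varepsilon}\int_{\partial K}(1+{\rm p}_1)\,d\mu$. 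Since $\frac{2\varepsilon}{\ln\frac{1+\varepsilon}{1-\varepsilon}}<1<\frac{\varepsilon}{\arctan\varepsilon}$, the stated bound does not follow. You must prove $D\ge0$, and this is exactly where horospherical convexity enters. Gauss--Bonnet gives $\int_{\partial K}{\rm p}_2\,d\mu=|\partial K|+4\pi$. The hyperbolic Alexandrov--Fenchel inequality for h-convex domains gives $\bigl(\int_{\partial K}{\rm p}_1\,d\mu\bigr)^2\ge|\partial K|^2+4\pi|\partial K|$, i.e.\ $B^2\ge AC$, with equality only for geodesic balls. This is \eqref{3.29} in the paper, and it also carries the equality statement. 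By contrast, your proof of the second line of the first bullet via the pointwise Maclaurin bound ${\rm p}_i\le{\rm p}_1^i$ is correct, and more self-contained than the paper's citation of \cite[Theorem 1.3]{JY}.

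\textbf{Third bullet: wrong monotone quantity.} The quantity you monotonize cannot be the plain average $|\Sigma_t|^{-1}\int_{\Sigma_t}({\rm p}_1^2-1)^m\,d\mu_t$. On the IMCF sphere this average decays like $e^{-2mt/(n-1)}$. Mere monotonicity therefore only compares you with the frozen value $\coth\widehat\rho_{2m}(K)$, giving the strictly weaker bound $\bigl(\frac{n-1}{p-1}\bigr)^{p-1}\coth^{p-1}\widehat\rho_{2m}(K)$. You need the scale-invariant quantity $Q_m(t)=|\Sigma_t|^{\frac{2m}{n-1}-1}\int_{\Sigma_t}({\rm p}_1^2-1)^m\,d\mu_t$ of Lemma~\ref{l4.2}; for $m=1$ this, not the average, is \eqref{Q}. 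It is proved by differentiating $\int f({\rm p}_1)$ with $f=({\rm p}_1^2-1)^m$, integrating the Laplacian term by parts (needs $f''\ge0$), and using $|\mathsf h|^2\ge(n-1){\rm p}_1^2$ (needs $f'\ge0$). For $m\ge2$ these sign conditions require ${\rm p}_1\ge1$, i.e.\ h-convexity preserved along the flow.

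\textbf{Third bullet: missing bridging steps.} Write $X={\rm p}_1^2-1$, with $\mathbb E$ the average over $d\mu/|\partial K|$. Hölder controls $\int{\rm p}_1^{p-1}$ only through $\int{\rm p}_1^{2m}=\int(1+X)^m$. To pass to $\int X^m$ you need either the binomial expansion with all $Q_j$, $j\le m$, plus $(\mathbb E X^j)^{1/j}\le(\mathbb E X^m)^{1/m}$ (the paper's route), or Minkowski in $L^m$ at each time. For \eqref{4.11c}, your pointwise $L^\infty$ bound controls $X^{(p-1)/2}$, not ${\rm p}_1^{p-1}$. To get $\rho_{p-1}(K)\ge\rho_{p,m}(K)$ you need Minkowski, $\sinh^{-2}\rho_{p-1}(K)\le(\mathbb E X^{(p-1)/2})^{2/(p-1)}$, followed by the decrease of $r\mapsto{\rm Cap}_p(\bar B(r))/|\partial\bar B(r)|$.

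\textbf{Second bullet: closing the differential inequality.} Your differentiation yields $\frac{d}{dt}\int{\rm p}_1^{q}\le\frac{n-1-q}{n-1}\int{\rm p}_1^{q}+\frac{q}{n-1}\int{\rm p}_1^{q-2}$. This closes to the sphere-saturated law $\frac{d}{dt}\bigl[(\phi-1)|\Sigma_t|^{2/(n-1)}\bigr]\le0$, with $\phi=(|\Sigma_t|^{-1}\int{\rm p}_1^q)^{2/q}$, only after applying Hölder: $\int{\rm p}_1^{q-2}\le|\Sigma_t|^{2/q}(\int{\rm p}_1^{q})^{(q-2)/q}$. That step, not the gradient term, is where $q=p-1\ge2$ is used.
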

\begin{proof} We are about to validate results one-by-one.

\subsubsection*{\underline{Argument for \eqref{cap2-H3}}}
Two situations are handled below.
\begin{itemize}
\item If $n-3=0=p-2$, then $\varepsilon$'s formula, along with a direct calculation of (\ref{1.3}), yields that
\begin{equation}\label{3.28}
{\rm Cap}_{2}(K) \leq
\begin{cases}
 \left(\frac{2\varepsilon}{\ln\frac{1+\varepsilon}{1-\varepsilon}}\right)\int_{\partial K}(1+{\rm p}_1)\, d\mu&\text{as}\ \ \left(\int_{\partial K}{\rm p}_1\, d\mu\right)^2>|\partial K|\int_{\partial K}{\rm p}_2\, d\mu;\\
\int_{\partial K}(1+{\rm p}_1)\, d\mu&\text{as}\ \ \left(\int_{\partial K}{\rm p}_1\, d\mu\right)^2=|\partial K|\int_{\partial K}{\rm p}_2\, d\mu;\\
\left(\frac{\varepsilon}{\arctan \varepsilon}\right)\int_{\partial K}(1+{\rm p}_1)\, d\mu&\text{as}\ \ \left(\int_{\partial K}{\rm p}_1\, d\mu\right)^2<|\partial K|\int_{\partial K}{\rm p}_2\, d\mu,
\end{cases}
\end{equation}
is valid for any compact domain $K\subset\mathbb{H}^3$ with smooth convex boundary $\partial K$. Furthermore, when $K\subset\mathbb{H}^3$ is horospherically convex, we have $$\int_{\partial K} {\rm p}_2\, d\mu-|\partial K|=4\pi\ \ \text{ within}\ \  \mathbb{H}^3,
$$
thereby employing
the hyperbolic Alexandrov-Fenchel inequality (cf.~\cite{GWW14,LWX14,WX14}) to gain
\begin{equation}\label{3.29}
\left(\int_{\partial K}{\rm p}_1\, d\mu\right)^2\geq |\partial K|^2+4\pi|\partial K|
=|\partial K|\int_{\partial K}{\rm p}_2\, d\mu,
\end{equation}
with equality iff $K$ is a geodesic ball. Consequently, a combination of \eqref{3.28}-\eqref{3.29} derives \eqref{cap2-H3}'s first inequality. Of course, \eqref{cap2-H3}'s equality case under $p=2=n-1$ amounts to \eqref{3.29}'s equality case with $p=2=n-1$. Thus this equality case is true iff $K$ is a geodesic ball.

\item Suppose $1<p<\infty$. According to \cite[Theorem 5.1]{JX}, there holds (cf. \eqref{cap-rV})
\begin{equation}
\label{5.13a}
{\rm RCap}_p(K)\le ({n-1})^{-1}\ln{\rm RV}(K)
\end{equation}
with equality iff $K$ is a geodesic ball. In the meantime, \cite[Theorem 1.3]{JY} for $\mathbb H^n$ especially ensures
\begin{equation}
\label{5.13b}
\int_{ \partial K}\left(1+{\rm p}_1\right)^{n-1}\,d\mu\ge \omega_{n-1}{\rm RV}(K)
\end{equation}
with equality iff $K$ is a geodesic ball. Now, a combination of \eqref{5.13a}-\eqref{5.13b}, together with their equality settings, implies \eqref{cap2-H3}'s second inequality with its equality.
\end{itemize}

\subsubsection*{\underline{Argument for \eqref{cap-will-Hn}}}

Since ${\rm p}_1\equiv\coth r$ on the geodesic sphere $\partial\bar B(r)$, the defining identity for the $L^q$-mean-curvature radius is equivalently
\[
\begin{cases}
\coth\rho_q(K)=\left(\frac{\displaystyle \int_{\partial K}{\rm p}_1^q\,d\mu}{|\partial K|}\right)^{\frac1q};\\
\frac1{\sinh^2\rho_q(K)}=
\left(\frac{\displaystyle \int_{\partial K}{\rm p}_1^q\,d\mu}{|\partial K|}\right)^{\frac2q}-1.
\end{cases}
\]

A consideration of the IMCF
$$X:\mathbb{S}^{n-1}\times(0,\infty)\to\mathbb{H}^n\ \ \text{solving}\ \
\begin{cases}
	\partial_t X = \frac{\mathbf{n}}{(n-1){\rm p}_1}; \\
	X(\mathbb{S}^{n-1},0)= \partial K,
\end{cases}
$$
along with the result of Gerhardt \cite{Ge11}, gives that not only the flow hypersurface $\partial K_t$ of IMCF remains star-shaped, mean-convex, expands to infinity, and the principal curvatures $\kappa_i$ decay to $1$ exponentially as $t\to\infty$, but also
\begin{equation}\label{cap-Tp-Hn-s5}
\begin{cases}
{\rm Cap}_{p}(K) \leq \left( \int_0^\infty \big(T_p(t)\big)^{\frac{1}{1-p}} \, dt \right)^{1-p};\\
T_p(t) = \int_{\partial K_t} \big((n-1){\rm p}_1\big)^{p-1} \, d\mu_t;\\
|\partial K_t|=e^t|\partial K|.
\end{cases}
\end{equation}

 If $3\le p<\infty$, then via differentiating \eqref{cap-Tp-Hn-s5}'s $T_p(t)$ with respect to $t$, we are about to employ not only the basic inequality
$$
(n-1){\rm p}_1^2\le |\mathsf{h}|^2 = \text{the squared norm of the second fundamental form $\mathsf{h}=({\mathsf{h}}_{ij})$ of $\mathbb H^n$}
$$
but also the given condition $p\ge 3$ to obtain that if
$$
T_{p,\ast}(t)=\int_{\partial K_t}{\rm p}_1^{p-1}\,d\mu_t=(n-1)^{1-p}T_p(t)
$$
then
{\small
\begin{align*}
\frac{d T_{p,\ast}(t)}{dt}&=\int_{\partial K_t}{\rm p}_1^{p-1}\,d\mu_t+(p-1)\int_{ \partial K_t}{\rm p}_1^{p-2}\left(\frac{\Delta\sigma_1^{-1}}{1-n}+{|{\mathsf h}|^2}{\big((1-n)\sigma_1\big)^{-1}}+\sigma_1^{-1}\right)\,d\mu_t\\
&\le \left(\frac{(1-p)(p-2)}{(n-1)^2}\right)\int_{\partial K_t}{\rm p}_1^{p-4}|\nabla{\rm p}_1|^2\,d\mu_t+\frac{\int_{\partial K_t}{\rm p}_1^{p-1}\,d\mu_t}{\frac{n-1}{n-p}}+\frac{\int_{\partial K_t}{\rm p}_1^{p-3}\,d\mu_t}{\frac{n-1}{p-1}}\\
&\le\left(\frac{n-p}{n-1}\right)\int_{\partial K_t}{\rm p}_1^{p-1}\,d\mu_t+\left(\frac{p-1}{n-1}\right)\int_{\partial K_t}{\rm p}_1^{p-3}\,d\mu_t\\
&\ \ \le\left(\frac{n-p}{n-1}\right)T_{p,\ast}(t)+\left(\frac{p-1}{n-1}\right)|\partial K_t|^\frac{2}{p-1}\big(T_{p,\ast}(t)\big)^\frac{p-3}{p-1},
\end{align*}}
where $\sigma_1=(n-1){\rm p}_1$ still stands for the first mean curvature without normalization. Consequently, we achieve
\begin{equation}
\label{4.19b}
\begin{cases}
\frac{d}{dt}\left(\frac{T_{p,\ast}(t)}{|\partial K_t|^\frac{n-p}{n-1}}\right)\le\left(\frac{p-1}{n-1}\right)|\partial K_t|^\frac{2}{n-1}\left(|\partial K_t|^\frac{p-n}{n-1}T_{p,\ast}(t)\right)^\frac{p-3}{p-1};\\
\frac{d}{dt}\left(\left(\frac{T_{p,\ast}(t)}{|\partial K_t|^\frac{n-p}{n-1}}\right)^\frac{2}{p-1}-|\partial K_t|^\frac{2}{n-1}\right)\le 0;\\
\big(T_{p,\ast}(t)\big)^\frac1{p-1}\le|\partial K_t|^\frac{n-p}{(n-1)(p-1)}\left(|\partial K_t|^\frac{2}{n-1}\Big(|\partial K|^\frac{p-n}{n-1}T_{p,\ast}(0)\Big)^\frac{2}{p-1}-|\partial K|^\frac{2}{n-1}\right)^{\frac12}.
\end{cases}
\end{equation}
This last estimation, along with the change of variables $s=e^\frac{t}{1-p}$, derives
\begin{align*}
\frac{{\rm Cap}_p(K)}{|\partial K|}
&\le
\left(\frac{n-1}{p-1}\right)^{p-1}
\left(
\int_0^1
\left(
1+
\left[
\left(\frac{\displaystyle \int_{\partial K}{\rm p}_1^{p-1}\,d\mu}{|\partial K|}\right)^{\frac{2}{p-1}}-1
\right]
s^{\frac{2(p-1)}{n-1}}
\right)^{-\frac12}ds
\right)^{1-p}
\\
&=
\left(\frac{n-1}{p-1}\right)^{p-1}
\left(
\int_0^1
\left(
1+\frac{s^{\frac{2(p-1)}{n-1}}}{\sinh^2\rho_{p-1}(K)}
\right)^{-\frac12}ds
\right)^{1-p}.
\end{align*}
To identify the last expression with the geodesic-ball model, use \eqref{1.1} and make the change of variables
\[
\sinh r=\sinh\rho_{p-1}(K)\,s^{-\frac{p-1}{n-1}}\  \  \forall\  \ s\in (0,1].
\]
Then
\[
\int_{\rho_{p-1}(K)}^\infty
\left(
\frac{\sinh\rho_{p-1}(K)}{\sinh r}
\right)^{\frac{n-1}{p-1}}dr
=
\frac{p-1}{n-1}
\int_0^1
\left(
1+\frac{s^{\frac{2(p-1)}{n-1}}}{\sinh^2\rho_{p-1}(K)}
\right)^{-\frac12}ds,
\]
and hence
\[
\left(\frac{n-1}{p-1}\right)^{p-1}
\left(
\int_0^1
\left(
1+\frac{s^{\frac{2(p-1)}{n-1}}}{\sinh^2\rho_{p-1}(K)}
\right)^{-\frac12}ds
\right)^{1-p}
=
\frac{{\rm Cap}_{p}\big(\bar B(\rho_{p-1}(K))\big)}
{|\partial\bar B(\rho_{p-1}(K))|}.
\]
Moreover, since $0<s<1$,
\[
\int_0^1
\left(
1+\frac{s^{\frac{2(p-1)}{n-1}}}{\sinh^2\rho_{p-1}(K)}
\right)^{-\frac12}ds
>
\left(
1+\frac{1}{\sinh^2\rho_{p-1}(K)}
\right)^{-\frac12},
\]
which yields the strict second inequality in \eqref{cap-will-Hn}. Thus \eqref{cap-will-Hn} follows for $p\in[3,\infty)$.

If (\ref{cap-will-Hn})'s equality holds, then by analyzing the equality case of \eqref{4.19b}, we see that all on $\partial K_t$ are umbilical. Hence, $K$ is a geodesic ball. Conversely, if $K$ is a geodesic ball, then (\ref{cap-will-Hn})'s equality is automatically true.

\subsubsection*{\underline{Arguments for \eqref{cap-will-Hn2} and \eqref{4.11c}}} This process consists of two parts.

\begin{itemize}
\item The part on \eqref{cap-will-Hn2} will be completed in two steps.
\begin{itemize}
\item The first is to establish the \eqref{Q}'s $[1,\infty)\ni a$-generalization as seen below.
\begin{lem}\label{l4.2}
If $\Sigma_0\subset\mathbb{H}^{n\ge 3}$ is a star-shaped smooth hypersurface with
\begin{equation}
\label{4.18o}
\text{either\
                      $a>1$ \&\ $\Sigma_0$ is h-convex \ or $a=1$ \&\  $\Sigma_0$ is mean-convex, \ }
                    \end{equation}
                        then
\begin{equation}\label{qa}
Q_a(t)=|\Sigma_t|^{\frac{2a}{n-1}-1}\int_{\Sigma_t} ({\rm p}_1^2 - 1)^a \, d\mu_t\  \text{is decreasing along the IMCF $[0,\infty)\ni t\mapsto \Sigma_t$}.
\end{equation}
\end{lem}
\begin{proof}
In fact, upon denoting
$$ f_a({\rm p_1})=({\rm p}_1^2-1)^a \ \ \ \& \ \ \ F_a(t)=\int_{\Sigma_t}  f_a({\rm p}_1) \, d\mu_t,
$$
we can calculate{
$$
\begin{aligned}
\frac{dF_a(t)}{dt}&= \int_{\Sigma_t} f_a'({\rm p_1})\left(\frac{\partial {\rm p_1}}{\partial t}\right)\, d\mu_t +F_a(t)
\\ &=-{(n-1)^{-2}} \int_{\Sigma_t} \Bigg(f_a'({\rm p_1})\Delta({{\rm p_1}}^{-1})+\left(\frac{f_a'({\rm p_1})}{{\rm p_1}}\right)(|\mathsf{h}|^2-(n-1))\Bigg)\, d\mu_t +F_a(t) \\
&=-{(n-1)^{-2}} \int_{\Sigma_t} \left(\frac{f_a''({\rm p_1})}{{\rm p_1}^2}\right)|\nabla {\rm p_1}|^2 \, d\mu_t \\
&\  \  \  \,-{(n-1)^{-2}} \int_{\Sigma_t}\left(\frac{f_a'({\rm p_1})}{{\rm p_1}}\right)\big(|\mathsf{h}|^2-(n-1)\big)\, d\mu_t +F_a(t).
\end{aligned}
$$}
If $a=1,$ then \eqref{4.18o} implies
$$
\begin{cases}
  f_a'({\rm p_1})=2{\rm p}_1\geq0;\\
  f_a''({\rm p_1})=2>0,
  \end{cases}
$$
along the IMCF.
\par By the general tensor maximum principle argument in the proof of Proposition 2.2 in \cite{GWW14}, applied with $F=\sigma_1,$
 horospherical convexity is preserved along the IMCF.
If $a>1,$ then \eqref{4.18o} implies
\begin{equation}\label{f_a'}
  \begin{cases}
  f_a'({\rm p_1})=2a {\rm p_1}({\rm p}_1^2-1)^{a-1}\geq0; \\
   f_a''({\rm p_1})= 2a({\rm p}_1^2-1)^{a-1}+4a(a-1){\rm p}_1^2({\rm p}_1^2-1)^{a-2}\geq0,
\end{cases}
\end{equation}
along the IMCF. Here for the regularity problem when $a\in(1,2),$ one may consider $$
f_a({\rm p_1})=({\rm p}_1^2-1+\varepsilon)^a
$$and let $\varepsilon\rightarrow 0.$
Also, recall the trace inequality $$|\mathsf{h}|^2 \geq (n-1){\rm p}_1^2.$$ So, we can employ \eqref{f_a'} to obtain
\begin{align*}
\frac{dF_a(t)}{dt}&\leq -{(n-1)^{-2}} \int_{\Sigma_t}\left(\frac{f_a'({\rm p_1})}{{\rm p_1}}\right)\big((n-1){\rm p}_1^2-(n-1)\big)\, d\mu_t +F_a(t)\\
&=\left(1-\frac{2a}{n-1}\right)F_a(t),
\end{align*}
whence getting
$$\frac{dQ_a(t)}{dt}\leq 0\  \  \text{thanks to}\  \ |\Sigma_t|=e^t|\Sigma|,
$$
as required within \eqref{qa}.
\end{proof}
\item The second is to reach \eqref{cap-will-Hn2} via Lemma \ref{l4.2}.

First of all, for $a\in [1,\infty)$ along the IMCF, the decreasing property of $Q_a(t)$ in (\ref{qa}) with
$$
\Sigma_0=\partial K\ \ \&\ \ \Sigma_t=\partial K_t,
$$
yields
\begin{equation}\label{4.20}
\begin{cases}
|\partial K_t|^{\frac{2a}{n-1}-1}\int_{\partial K_t} ({\rm p}_1^2 - 1)^a \, d\mu_t \leq | \partial K|^{\frac{2a}{n-1}-1}\int_{ \partial K} ({\rm p}_1^2 - 1)^a \, d\mu;\\
\int_{\partial K_t} ({\rm p}_1^2 - 1)^a \, d\mu_t \leq e^{(1-\frac{2a}{n-1})t}\int_{\partial K} ({\rm p}_1^2 - 1)^a \, d\mu=
 e^t\int_{\partial K} \big(({\rm p}_1^2 - 1)e^{\frac{-2t}{n-1}}\big)^a \, d\mu.
\end{cases}
\end{equation}
Notice that the above estimation \eqref{4.20} also holds for $a=0$. Thus, as an application we get
$$\begin{aligned}
\int_{\partial K_t} {\rm p}_1^{2m} \, d\mu_t &=\int_{\partial K_t} (1+{\rm p}_1^2-1)^m \, d\mu_t
\\ & =\sum_{j=0}^m \binom{m}{j}\int_{\partial K_t} ({\rm p}_1^2-1)^j \, d\mu_t
\\&\leq \sum_{j=0}^m \binom{m}{j}e^t\int_{\partial K} \big(({\rm p}_1^2 - 1)e^{\frac{-2t}{n-1}}\big)^j \, d\mu
\\ &=e^t\int_{\partial K} \big(1+({\rm p}_1^2 - 1)e^{\frac{-2t}{n-1}}\big)^m \, d\mu
\end{aligned}
$$

Next, (\ref{cap-Tp-Hn}), along with the H\"older inequality, implies
$$\begin{aligned}
T_p(t) & \leq \left(\int_{\partial K_t}\big((n-1){\rm p}_1\big)^{2m} \, d\mu_t\right)^{\frac{p-1}{2m}}|\partial K_t|^{1-\frac{p-1}{2m}}
\\ &\leq (n-1)^{p-1}\left(e^t\int_{\partial K} \big(1+({\rm p}_1^2 - 1)e^{\frac{-2t}{n-1}}\big)^m \, d\mu\right)^{\frac{p-1}{2m}}(e^t|\partial K|)^{1-\frac{p-1}{2m}}
\\ &=  (n-1)^{p-1}|\partial K|^{1-\frac{p-1}{2m}}e^t\left(\int_{\partial K} \big(1+({\rm p}_1^2 - 1)e^{\frac{-2t}{n-1}}\big)^m \, d\mu\right)^{\frac{p-1}{2m}}
\end{aligned}
$$
whence
\begin{align}\label{4.21}
{\rm Cap}_{p}(K) &\leq (n-1)^{p-1}|\partial K|^{1-\frac{p-1}{2m}} \left( \int_0^\infty \left(\int_{\partial K} \big(1+({\rm p}_1^2 - 1)e^{\frac{-2t}{n-1}}\big)^m \, d\mu\right)^{\frac{-1}{2m}} e^{\frac{t}{1-p}}\, dt \right)^{1-p}\\
&= \left(\frac{n-1}{p-1}\right)^{p-1}|\partial K|^{1-\frac{p-1}{2m}}  \left( \int_0^1 \left( \int_{\partial K} \left(1+({\rm p}_1^2 - 1) s^{\frac{2(p-1)}{n-1}}\right)^m \, d\mu \right)^{-\frac{1}{2m}} ds\right)^{1-p}\notag
\\
&= \left(\frac{n-1}{p-1}\right)^{p-1}|\partial K|  \left( \int_0^1 \left(\frac{\displaystyle \int_{\partial K} \left(1+({\rm p}_1^2 - 1) s^{\frac{2(p-1)}{n-1}}\right)^m\,d\mu}{|\partial K|} \right)^{-\frac{1}{2m}} ds\right)^{1-p},\notag
\end{align}
whose first equality case can be verified in the same way as that for (\ref{cap-will-Hn}).

Since ${\rm p}_1^2-1\equiv\sinh^{-2}r$ on $\partial\bar B(r)$, the defining identity for the $q$-curvature-excess radius is equivalently
\[
\frac1{\sinh^2\widehat{\rho}_q(K)}
=
\left(
\frac{\displaystyle \int_{\partial K}({\rm p}_1^2-1)^{q/2}\,d\mu}{|\partial K|}
\right)^{\frac2q}.
\]
In particular, for $q=2m$,
\[
\frac1{\sinh^2\widehat{\rho}_{2m}(K)}
=
\left(
\frac{\displaystyle \int_{\partial K}({\rm p}_1^2-1)^m\,d\mu}{|\partial K|}
\right)^{\frac1m}.
\]

Finally, by H\"older's inequality, for every $j\in\{1,\ldots,m\}$,
\[
\left(
\frac{\displaystyle \int_{\partial K}({\rm p}_1^2-1)^j\,d\mu}{|\partial K|}
\right)^{1/j}
\le
\left(
\frac{\displaystyle \int_{\partial K}({\rm p}_1^2-1)^m\,d\mu}{|\partial K|}
\right)^{1/m}
=
\frac{1}{\sinh^2\widehat{\rho}_{2m}(K)}.
\]
Consequently,
\begin{align}\label{4.22}
 \frac{\displaystyle \int_{\partial K} \left(1+({\rm p}_1^2 - 1) s^{\frac{2(p-1)}{n-1}}\right)^m\,d\mu}{|\partial K|}
&=
\sum_{j=0}^m \binom{m}{j}
\left(
\frac{\displaystyle \int_{\partial K}({\rm p}_1^2-1)^j\,d\mu}{|\partial K|}
\right)
s^{\frac{2(p-1)j}{n-1}}
\\
&\le
\sum_{j=0}^m\binom{m}{j}
\frac{s^{\frac{2(p-1)j}{n-1}}}{\sinh^{2j}\widehat{\rho}_{2m}(K)}
\notag\\
&=
\left(
1+\frac{s^{\frac{2(p-1)}{n-1}}}{\sinh^2\widehat{\rho}_{2m}(K)}
\right)^m,\notag
\end{align}
whose inequality becomes equality iff $K$ is a geodesic ball or $m=1$.

Combining \eqref{4.21} and \eqref{4.22} gives \eqref{cap-will-Hn2}. To identify it with the geodesic-ball ratio, make the change of variables
\[
\sinh r=\sinh\widehat{\rho}_{2m}(K)\,s^{-\frac{p-1}{n-1}}\ \  \forall\ \ s\in (0,1].
\]
Exactly as above, one has
\[
\left(\frac{n-1}{p-1}\right)^{p-1}
\left(
\int_0^1
\left(
1+\frac{s^{\frac{2(p-1)}{n-1}}}{\sinh^2\widehat{\rho}_{2m}(K)}
\right)^{-\frac12}ds
\right)^{1-p}
=
\frac{{\rm Cap}_{p}\big(\bar B(\widehat{\rho}_{2m}(K))\big)}
{|\partial\bar B(\widehat{\rho}_{2m}(K))|}.
\]

 In particular, for $p=2=n-1$ there holds
\begin{equation}\label{4.23}
{\rm Cap}_{2}(K) \leq | \partial K| \left(1+\sqrt{\frac{\int_{ \partial K}{\rm p}_1^2\, d\mu }{|\partial K|}}\right).
\end{equation}
\end{itemize}

\item Finally, \eqref{4.11c} follows from \eqref{cap-will-Hn} by interpolation. Put $q=p-1>2m$ and consider the probability space $(\partial K,d\mu/|\partial K|)$ with
\[
X={\rm p}_1^2-1\ge0.
\]
Since $$X\equiv\sinh^{-2}r\ \ \text{on}\ \ \partial\bar B(r),
$$
the defining identity for $\rho_{p,m}(K)$ is equivalently
\begin{align*}
\frac1{\sinh^2\rho_{p,m}(K)}
&=
\big(\|{\rm p}_1\|_\infty^2-1\big)^{1-\frac{2m}{p-1}}
\left(
\frac{\displaystyle \int_{\partial K}X^m\,d\mu}{|\partial K|}
\right)^{\frac{2}{p-1}}\\
&=
\big(\|{\rm p}_1\|_\infty^2-1\big)^{1-\frac{2m}{p-1}}
\left(\frac1{\sinh^2\widehat{\rho}_{2m}(K)}\right)^{\frac{2m}{p-1}}.
\end{align*}
By Minkowski's inequality,
\begin{align*}
\frac1{\sinh^2\rho_q(K)}
&=
\left(
\frac{\displaystyle \int_{\partial K}(1+X)^{q/2}\,d\mu}{|\partial K|}
\right)^{2/q}-1
\le
\left(
\frac{\displaystyle \int_{\partial K}X^{q/2}\,d\mu}{|\partial K|}
\right)^{2/q}.
\end{align*}
Since $q/2>m$, the $L^m$--$L^\infty$ interpolation inequality gives
\begin{align*}
\left(
\frac{\displaystyle \int_{\partial K}X^{q/2}\,d\mu}{|\partial K|}
\right)^{2/q}
&\le
\|X\|_\infty^{1-\frac{2m}{q}}
\left(
\frac{\displaystyle \int_{\partial K}X^m\,d\mu}{|\partial K|}
\right)^{2/q}
\\
&=
\big(\|{\rm p}_1\|_\infty^2-1\big)^{1-\frac{2m}{p-1}}
\left(\frac1{\sinh^2\widehat{\rho}_{2m}(K)}\right)^{\frac{2m}{p-1}}
\\
&=
\frac1{\sinh^2\rho_{p,m}(K)}.
\end{align*}
Consequently,
\[
\rho_{p-1}(K)\ge \rho_{p,m}(K).
\]
Since
\[
r\longmapsto\frac{{\rm Cap}_p(\bar B(r))}{|\partial\bar B(r)|}
\]
is decreasing, \eqref{cap-will-Hn} immediately implies \eqref{4.11c}. Equality in \eqref{4.11c} forces equality in \eqref{cap-will-Hn}, and hence $K$ is a geodesic ball; the converse is immediate.
\end{itemize}

\end{proof}

\begin{rem} Six comments are in order.

\begin{itemize}

\item \eqref{cap2-H3}'s first part is the hyperbolic counterpart of Szeg\"{o}'s classical result in \cite{Sze31}. Also, note that the relative volume in $\mathbb{H}^3$ satisfies
$$
{\rm RV}(K)=1+(2\pi)^{-1}\int_{\partial K}(1+{\rm p}_1)\,d\mu.
$$
Thus, \eqref{cap2-H3}'s first part provides a refinement of \eqref{cap-rV} under $n-3=0=p-2$. Meanwhile, the endpoint $p\to1$ of \eqref{cap2-H3}'s second inequality reads
$$
{\rm RCap}_1(K)\le(n-1)^{-1}\ln\left(\omega_{n-1}^{-1}\int_{\partial K}(1+{\rm p}_1)^{n-1}\,d\mu\right),
$$
with equality if $K$ is a geodesic ball.

\item The Cauchy--Schwarz inequality implies
$$
\sqrt{\frac{\int_{\partial K}{\rm p}_1^2\,d\mu}{|\partial K|}}
\ge
\frac{\int_{\partial K}{\rm p}_1\,d\mu}{|\partial K|}.
$$
Therefore, \eqref{4.23} is weaker than \eqref{cap2-H3}'s first inequality; however, the latter holds under the stronger assumption of horospherical convexity. In dimension $n=2$, the same curvature-radius form of \eqref{cap-will-Hn} is consistent with \cite[Theorem 4]{LLX}; namely, for $p\ge3$,
$$
\frac{{\rm Cap}_p(K)}{|\partial K|}
\le
\frac{{\rm Cap}_p\big(\bar B(\rho_{p-1}(K))\big)}
{|\partial\bar B(\rho_{p-1}(K))|},
$$
with equality iff $K$ is a geodesic ball. This may be viewed as an uncertainty principle for the triple
$$
\left\{{\rm Cap}_p,\ W_1,\ \rho_{p-1}\right\}.
$$

\item The two effective radii agree at the quadratic level:
$$
\rho_2(K)=\widehat{\rho}_2(K),
$$
because
$$
\frac1{\sinh^2\rho_2(K)}
=
\frac{\displaystyle \int_{\partial K}({\rm p}_1^2-1)\,d\mu}{|\partial K|}
=
\frac1{\sinh^2\widehat{\rho}_2(K)}.
$$
Consequently, \eqref{cap-will-Hn2}'s case $m=1$ coincides with \eqref{cap-will-Hn}'s case $p=3$.

\item If $3\le p\le2m+1$, then Minkowski's inequality followed by H\"older's inequality yields
\begin{align*}
\frac1{\sinh^2\rho_{p-1}(K)}
&=
\left(
\frac{\displaystyle \int_{\partial K}{\rm p}_1^{p-1}\,d\mu}{|\partial K|}
\right)^{\frac{2}{p-1}}-1
\\
&\le
\left(
\frac{\displaystyle \int_{\partial K}({\rm p}_1^2-1)^{\frac{p-1}{2}}\,d\mu}{|\partial K|}
\right)^{\frac{2}{p-1}}
\\
&\le
\left(
\frac{\displaystyle \int_{\partial K}({\rm p}_1^2-1)^m\,d\mu}{|\partial K|}
\right)^{\frac1m}\\
&=
\frac1{\sinh^2\widehat{\rho}_{2m}(K)}.
\end{align*}
Hence
$$
\rho_{p-1}(K)\ge\widehat{\rho}_{2m}(K).
$$
Moreover, the function
$$
r\longmapsto
\frac{{\rm Cap}_p(\bar B(r))}{|\partial\bar B(r)|}
$$
is decreasing, as is clear from the integral representation used in the proof of \eqref{cap-will-Hn}. It follows that \eqref{cap-will-Hn} is always sharper than the geodesic-ball bound in \eqref{cap-will-Hn2}. The stronger full-moment estimate \eqref{4.21}, however, retains all moments up to order $m$ and may therefore yield a sharper estimate.

\item Consider the probability space $(\partial K,d\mu/|\partial K|)$ and the random variable
$$
X={\rm p}_1^2-1\ge0.
$$
Then the full-moment estimate \eqref{4.21} can be written as
\begin{align*}
\frac{{\rm Cap}_{p}(K)}{|\partial K|}
&\le
\left(
\Big(\frac{p-1}{n-1}\Big)\int_0^1
\left(
\mathbb E\!\left[\left(1+Xs^{\frac{2(p-1)}{n-1}}\right)^m\right]
\right)^{-\frac1{2m}}ds
\right)^{1-p}
\\
&\le
\frac{{\rm Cap}_p\big(\bar B(\widehat{\rho}_{2m}(K))\big)}
{|\partial\bar B(\widehat{\rho}_{2m}(K))|},
\end{align*}
where
$$
\frac1{\sinh^2\widehat{\rho}_{2m}(K)}
=
\big(\mathbb E X^m\big)^{1/m}.
$$
Thus $\widehat{\rho}_{2m}(K)$ is precisely the geodesic radius determined by the $m$-th moment of the squared mean-curvature excess.

\item For $p>2m+1$, the radius $\rho_{p,m}(K)$ in \eqref{4.11c} interpolates between the finite-moment radius $\widehat{\rho}_{2m}(K)$ and the supremum curvature scale. Indeed,
\[
\begin{cases}
\lim_{p\downarrow 2m+1}\rho_{p,m}(K)=\widehat{\rho}_{2m}(K);\\
\lim_{p\to\infty}\coth\rho_{p,m}(K)=\|{\rm p}_1\|_\infty.
\end{cases}
\]
Moreover, the proof of \eqref{4.11c} gives
\[
\rho_{p-1}(K)\ge\rho_{p,m}(K),
\]
so \eqref{cap-will-Hn} is sharper whenever the full $(p-1)$-st curvature moment is retained, whereas \eqref{4.11c} uses only the $2m$-th curvature-excess radius together with the $L^\infty$ curvature scale. Thus \eqref{4.11c} provides a direct geodesic-ball interpolation between the finite-moment and supremum regimes.

\end{itemize}

\end{rem}

\end{document}